\numberwithin{equation}{section}
\DeclareSymbolFont{cyrletters}{OT2}{wncyr}{m}{n}
\DeclareMathSymbol{\Sha}{\mathalpha}{cyrletters}{"58}
\providecommand{\keywords}[1]{{2010}\textit{ Mathematics Subject Classification. }#1}
\newcommand{\Z}{\mathbb{Z}}
\newcommand{\Q}{\mathbb{Q}}
\newcommand{\QP}{\mathbb{Q}(\sqrt{-p})}
\newcommand{\QTP}{\mathbb{Q}(\sqrt{2p})}
\newcommand{\OO}{\mathcal{O}}
\newcommand{\CL}{\mathrm{Cl}}
\newcommand{\C}{\mathrm{Cl}^{+}}
\newcommand\FF{\mathbb{F}}
\newcommand\MM{\mathbb{M}}
\newcommand\DD{\mathcal{D}}
\newcommand\RRR{\mathcal{R}}
\newcommand\aaa{\mathfrak{a}}
\newcommand\bb{\mathfrak{b}}
\newcommand\mm{\mathfrak{m}}
\newcommand\pp{\mathfrak{p}}
\newcommand\pt{\mathfrak{t}}
\newcommand\rk{\mathrm{rk}}
\newcommand\Gal{\mathrm{Gal}}
\newcommand\Norm{\mathrm{N}}
\newcommand\Disc{\mathrm{Disc}}
\newcommand\Art{\mathrm{Art}}
\newcommand\spin{\mathrm{spin}}
\newcommand\ve{\varepsilon}
\newtheorem{theorem}{Theorem}
\newtheorem{lemma}{Lemma}[section]
\newtheorem{prop}[lemma]{Proposition}
\newtheorem{corollary}[theorem]{Corollary}
\newtheorem{conjecture}{Conjecture}
\title{Spins of prime ideals and the negative Pell equation $x^2 - 2py^2 = -1$}
\author[1]{Peter Koymans\thanks{Niels Bohrweg 1, 2333 CA Leiden, Netherlands, p.h.koymans@math.leidenuniv.nl}}
\author[2]{Djordjo Milovic\thanks{Gower Street, London, WC1E 6BT, United Kingdom, djordjo.milovic@ucl.ac.uk}}
\affil[1]{Mathematisch Instituut, Leiden University}
\affil[2]{Department of Mathematics, University College London}
\date{\today}
\begin{document}

\maketitle

\begin{abstract}
Let $p\equiv 1\bmod 4$ be a prime number. We use a number field variant of Vinogradov's method to prove density results about the following four arithmetic invariants: (i)~$16$-rank of the class group $\CL(-4p)$ of the imaginary quadratic number field $\Q(\sqrt{-4p})$; (ii)~$8$-rank of the ordinary class group $\CL(8p)$ of the real quadratic field $\Q(\sqrt{8p})$; (iii)~the solvability of the negative Pell equation $x^2 - 2py^2 = -1$ over the integers; (iv)~$2$-part of the Tate-\v{S}afarevi\v{c} group $\Sha(E_p)$ of the congruent number elliptic curve $E_p: y^2 = x^3-p^2x$. Our results are conditional on a standard conjecture about short character sums. 
\end{abstract}
\keywords{11R29, 11R45, 11N45, 11P21}

\section{Introduction}\label{Introduction}
In \cite{FIMR}, Friedlander, Iwaniec, Mazur, and Rubin associated a quantity $\spin(\aaa)\in\{0, \pm 1\}$ to each principal ideal $\aaa$ in the ring of integers of a totally real number field $K$ of degree $n\geq 3$ with a \textit{cyclic} Galois group over $\Q$. Assuming a standard conjecture about short character sums, they proved that $\spin(\pp)$ oscillates as $\pp$ varies over principal prime ideals. The conjecture is unconditional in the low-degree case when $n = 3$, and precisely in this setting their result has arithmetic applications to the distribution of 2-Selmer groups of quadratic twists of certain elliptic curves.

In this paper, we will associate a similar ``spin'' to ideals in the ring of integers $\OO_M$ of the totally complex number field
$$
M = \Q(\zeta_8, \sqrt{1+i}),
$$
where $\zeta_8$ is a primitive $8$th root of unity and $i = \zeta_8^2$. The essential part of our spin will come from symbols of the type 
\begin{equation}\label{defTotSpin}
[\alpha]_r = \left(\frac{r(\alpha)}{\alpha}\right),
\end{equation}
where $\left(\frac{\cdot}{\cdot}\right)$ is the quadratic residue symbol in $M$ and $r\in\Gal(M/\Q)$ is a fixed automorphism of order $4$. Following the basic strategy of \cite{FIMR}, we will also prove that the spin of prime ideals in $\OO_M$ oscillates. Unfortunately, the field $M$ is of degree $8$ over $\Q$, and we are forced to assume the $n = 8$ case of \cite[Conjecture $C_n$, p.\ 738]{FIMR}. Our result has applications to the arithmetic statistics of: (i) the $16$-rank of the class group of $\QP$, (ii) the $8$-rank of the \textit{ordinary} class group of the \textit{real} quadratic field $\QTP$, (iii) the negative Pell equation $x^2 - 2py^2 = -1$, and (iv) the congruent number elliptic curve $y^2 = x^3 - p^2x$.

There are two main innovations that separate the present work from \cite{FIMR}. First, we have a multitude of new arithmetic applications, made possible by carefully crafting a flexible new type of spin. Secondly, the Galois group of $M/\Q$ is dihedral of order~$8$, hence not cyclic, and this seemingly technical difference causes the original arguments in \cite{FIMR} to break down. Fortunately a lattice point counting argument offers a fix which also substantially simplifies the proof in \cite{FIMR}.   

Before stating our main results, we define the aforementioned spin $s_{\aaa}$ of non-zero ideals $\aaa\subset\OO_M$. One can check that $M/\Q$ is a totally complex dihedral extension of degree $8$, that $\OO_M$ is a principal ideal domain, and that $\zeta_8$ generates the torsion subgroup of the unit group~$\OO_M^{\times}$. We fix a subgroup $V\leq \OO_M^{\times}$ of rank $3$ such that $\OO_M^{\times} = \left\langle \zeta_8\right\rangle \times V$ and fix a set of coset representatives $\mu_1, \cdots, \mu_8$ for $V^2$ in $V$. We define a rational integer $F$ as in \eqref{defF}; although $F$ is an absolute constant, it is far too large to write out its decimal expansion. Suppose that
\begin{equation}\label{veModF}
\psi: (\OO_M/F\OO_M)^{\times}\rightarrow \mathbb{C}
\end{equation}
is a map such that $\psi(\alpha \bmod F) = \psi(\alpha\beta^2 \bmod F)$ for all $\alpha\in\OO_M$ coprime to $F$ and all $\beta\in\OO_M^{\times}$. Fix once and for all an element of order $4$ in $\Gal(M/\Q)$, denote it by $r$, and define~$[\cdot]_r$ as in \eqref{defTotSpin}. Finally, let $\aaa$ be a non-zero ideal in $\OO_M$. If $(\aaa, F)\neq 1$, define $s_{\aaa} = 0$. Otherwise, choose any generator $\alpha$ for $\aaa$ and define
\begin{equation}\label{defan}
s_{\aaa} = \frac{1}{64}\sum_{i = 1}^8\sum_{j = 1}^8 \psi(\mu_i \zeta_8^j \alpha\bmod F)\cdot [\mu_i \zeta_8^j \alpha]_r.
\end{equation}
The right hand side above is independent of the choice of a generator $\alpha$ for $\aaa$, as can be seen from \eqref{welldef} with $\sigma = r$. Compare the definition of $s_{\aaa}$ with the definition of $\spin(\aaa)$ in \cite[(3.4), p.\ 706]{FIMR}. The most important difference is that $r$ does \textit{not} generate the Galois group $\Gal(M/\Q)$, whereas in \cite{FIMR}, the automorphism $\sigma$ does generate $\Gal(K/\Q)$. This requires us to make an innovation in perhaps the most difficult part of the analytic arguments of Friedlander et al.\ \cite[p.\ 731-733]{FIMR}. Another difference is the extra averaging over generators of $\aaa$ in the definition of $s_{\aaa}$ above, necessary because, unlike in \cite{FIMR}, we cannot make simplifying assumptions about the field over which we work.

We now state our main theorem and its consequences, all conditional on Conjecture~\ref{conjSCS}, a standard conjecture about short character sums whose statement we postpone to Section~\ref{ConjectureSection}.

\begin{theorem}\label{mainThm}
Assume that Conjecture~\ref{conjSCS} holds with $\delta>0$. Then there is a constant $\delta' > 0$ depending only on $\delta$ such that for all $X>1$, we have
$$
\sum_{\Norm(\pp)\leq X}s_{\pp}\ll X^{1-\delta'},
$$
where the sum is taken over prime ideals $\pp\subset \OO_M$ of norm at most $X$ and the implied constant depends only on $\psi$. Moreover, one can take $\delta' = \delta/400$.
\end{theorem}

Let $\CL(D)$, $\C(D)$, $h(D)$, and $h^+(D)$ denote the class group, the narrow class group, the class number, and the narrow class number, respectively, of the quadratic field of discriminant~$D$. For a finite abelian group $G$ and an integer $k\geq 1$, we define the $2^k$-rank of~$G$ to be $\rk_{2^k}G =  \dim_{\FF_2}(2^{k-1}G/2^kG)$. A lot is known about the $8$-rank of $\C(dp)$ for $d$ fixed and $p$ varying among the prime numbers (see \cite{Ste1} and \cite{Smith1}). We will prove some long-standing conjectures about the $16$-rank of $\CL(-4p)$ and the $8$-rank of $\CL(8p)$ (see for instance \cite{CohnLag2} and in particular their Density Conjecture $D_j(d)$ on page 263).

\begin{theorem}\label{Thmi}
Assume that Conjecture~\ref{conjSCS} holds with $\delta > 0$ and let $\delta'$ be as in Theorem~\ref{mainThm}. Let $r\in\{0, 8\}$. For all $X \geq 41$, we have
$$
\frac{\#\{p\leq X: h(-4p)\equiv r\bmod 16\}}{\#\{p\leq X: h(-4p)\equiv 0\bmod 8\}} = \frac{1}{2} + O(X^{-\delta'}),
$$
where the implied constant is absolute.
\end{theorem}
\begin{theorem}\label{Thmii}
Assume that Conjecture~\ref{conjSCS} holds with $\delta > 0$ and let $\delta'$ be as in Theorem~\ref{mainThm}. Let $r\in\{0, 4\}$. Then for all $X\geq 113$, we have
$$
\frac{\#\{p\leq X: p\equiv 1\bmod 4, h(8p)\equiv r\bmod 8\}}{\#\{p\leq X: p\equiv 1\bmod 4, h^+(8p)\equiv 0\bmod 8\}} = \frac{1}{2} + O(X^{-\delta'}),
$$
where the implied constant is absolute.
\end{theorem}

Density results about the $2$-parts of the narrow and ordinary class groups of $\Q(\sqrt{8p})$ have implications for the arithmetic statistics of the solvability of the negative Pell equation
\begin{equation}\label{nPell}
x^2 - 2py^2 = -1
\end{equation}
with $x, y\in \Z$. For each $X\geq 3$, let
$$
\delta^-(X) = \frac{\#\{p\text{ prime}: p\leq X, \eqref{nPell} \text{ is solvable over }\Z\}}{\#\{p\text{ prime}: p\leq X\}}.
$$
Stevenhagen conjectured in \cite{SteConj} that $\lim_{X\rightarrow\infty}\delta^-(X)$ exists and equals to $1/3$. We prove

\begin{theorem}\label{Thmiii}
Assume that Conjecture~\ref{conjSCS} holds. Let $\delta^-(X)$ be defined as above. Then
$$
\frac{5}{16}\leq \liminf_{X\rightarrow\infty}\delta^-(X) \leq \limsup_{X\rightarrow\infty}\delta^-(X) \leq \frac{11}{32}.
$$ 
In particular, $|\delta^{-}(X)-1/3| \leq 1/48 + o(X)$ as $X\rightarrow \infty$, so the bounds above are within $2.08\%$ of Stevenhagen's Conjecture.
\end{theorem}

Finally, we state an application of Theorem~\ref{mainThm} to the distribution of the Tate-\v{S}afarevi\v{c} groups $\Sha(E_p)$ of the congruent number elliptic curves
$$
E_p: y^2 = x^3-p^2x.
$$

\begin{theorem}\label{Thmiv}
Assume that Conjecture~\ref{conjSCS} holds. Then
$$
\liminf_{X\rightarrow\infty}\frac{\#\{p\leq X:\ \left(\Z/4\Z\right)^2\hookrightarrow\Sha(E_p)\}}{\#\{p\leq X\}}\geq \frac{1}{16}.
$$
\end{theorem}

\section{Discussion of results}

\subsection{$16$-rank of class groups}
Aside from two recent results due to the authors \cite{Milovic2, KM2}, density results about the $16$-rank of class groups in one-prime-parameter families $\{\Q(\sqrt{dp})\}_p$ ($d$ fixed and $p$ varying) have remained elusive despite a large body of work on algebraic criteria for the $16$-rank in such families \cite{Kaplan77, Oriat78, KW82, LW82, KW84, Y84, KWH86, Ste2, BH}. This gap between algebraic and analytic understanding of the $16$-rank can be largely attributed to the absence of appropriate governing fields and the subsequent inability to apply the \v{C}ebotarev Density Theorem. More precisely, for a finite extension of number fields $E/F$, let $\Art_{E/F}$ denote the corresponding Artin map. Cohn and Lagarias~\cite{CohnLag, CohnLag2} conjectured that, for each integer $k\geq 1$ and each integer $d\not\equiv 2\bmod 4$, the map
$$
f_{d, k}: p\mapsto \rk_{2^k}\C(dp)
$$
is \textit{Frobenian}, in the sense of Serre \cite{JPS2}. In other words, they conjectured that there exists a normal field extension $M_{d, k}/\Q$ for which there is a class function
$$
\phi: \Gal(M_{d, k}/\Q)\rightarrow \Z_{\geq 0}
$$
satisfying
$$
f_{d, k}(p)= \phi(\Art_{M_{d, k}/\Q}(p))
$$
for all primes $p$ unramified in $M_{d, k}/\Q$; such a field $M_{d, k}$ is called a \textit{governing field} for $\{\rk_{2^k}\C(dp)\}_p$. For $k\leq 3$, Stevenhagen \cite{Ste1} proved these conjectures for all $d\not\equiv 2\bmod 4$. Perhaps the simplest case is $d = -4$, where one can take $M_{-4, 3}$ to be the field $M = \Q(\zeta_8, \sqrt{1+i})$ as above and where $h(-4p) \equiv 0\bmod 8$ if and only if $p$ splits completely in $M$. Hence, by the \v{C}ebotarev Density Theorem, the density of primes $p$ such that $h(-4p)\equiv 0\bmod 8$ is equal to $1/[M:\Q] = 1/8$.

Cohn and Lagarias~\cite{CohnLag2} ruled out some obvious candidates for $M_{-4, 4}$, i.e., the governing field for the $16$-rank of $\CL(-4p)$, and to this day no governing fields for the $16$-rank in \textit{any} family have been found. Nevertheless, we're able to show, in Theorem~\ref{Thmi}, that the density of primes $p$ such that $h(-4p)\equiv 0\bmod 16$ exists and is equal to $1/16$. It is proved unconditionally in \cite{Milovic1} that there are infinitely many primes~$p$ such that $h(-4p)\equiv 0\bmod 16$, but that result implies nothing about the density as in Theorem~\ref{mainThm}.

The key innovation that allows us to go beyond the $8$-rank is to use Vinogradov's method \cite{Vino1, Vino2} for studying the distribution of prime numbers instead of the heretofore used \v{C}ebotarev Density Theorem (as in \cite{Smith1}, for instance). Moreover, the current state-of-the-art bounds for the error term in the \v{C}ebotarev Density Theorem are essentially of size $X\exp(-\sqrt{\log X})$, far worse than the power-saving bound $X^{1-\delta'}$ in Theorem~\ref{Thmi}. In fact, obtaining such a power-saving error term in the \v{C}ebotarev Density Theorem would be tantamount to proving a zero-free region for the associated Artin $L$-functions of the form $\Re(s)> 1- \delta'$, and this is well out of reach of current methods in analytic number theory. Nonetheless, the power-saving bound $X^{1-\delta'}$ does \textit{not} prove the non-existence of a governing field -- it merely suggests that one is unlikely to exist. We summarize this discussion with the following immediate corollary of Theorem~\ref{Thmi}. 
\begin{corollary}
Assume Conjecture~\ref{conjSCS} with $\delta>0$, and let $\delta'$ be as in Theorem~\ref{mainThm}. At least one of the following two statements is true:
\begin{itemize}
\item a governing field for $\rk_{16}\CL(-4p)$ does \textit{not} exist;
\item there exists a normal extension $L/\Q$ and two distinct unions of conjugacy classes in $\Gal(L/\Q)$, say $S_1$ and $S_2$, such that for all $X>0$, we have
$$
\#\{p\leq X: (p, L/\Q)\subset S_1\} - \#\{p\leq X: (p, L/\Q)\subset S_2\} \ll X^{1-\delta'},
$$
where the implied constant is absolute. Here $(p, L/\Q)$ denotes the Artin conjugacy class of $p$ in $\Gal(L/\Q)$.
\end{itemize}
\end{corollary}

\subsection{Real quadratic fields and the negative Pell equation}
In case $d<0$, the narrow class group $\C(dp)$ is the same as the ordinary class group $\CL(dp)$. If $d>0$, however, then $\C(dp)$ and $\CL(dp)$ may be different; in fact, $\C(dp) = \CL(dp)$ if and only if the fundamental unit $\ve_{dp}$ of $\Q(\sqrt{dp})$ has norm $-1$. While Cohn and Lagarias stated their conjecture on the existence of governing fields only for narrow class groups, one can ask what happens for ordinary class groups. As mentioned before, Stevenhagen proved the conjecture of Cohn and Lagarias for the $8$-rank of narrow class groups of both imaginary and real quadratic fields. Theorem~\ref{Thmii} is the first density result for the $8$-rank of the \textit{ordinary} class group in a family of \textit{real} quadratic fields. Again the power-saving error term suggests that there is no governing field for $\rk_8\CL(8p)$ in the family $\{\CL(8p)\}_{p\equiv 1\bmod 4}$. To place Theorem~\ref{Thmii} in context, we note that the $2$-part of $\C(8p)$ is cyclic, and, for $p \equiv 1\bmod 4$, one has (for instance, see \cite{Ste2})
\begin{itemize}
\item $h^+(8p) = h(8p) \equiv 2\bmod 4 \Leftrightarrow p \text{ splits completely in }\Q(i)\text{ but not in }\Q(\zeta_{8})$;
\item $h^+(8p) \equiv h(8p) + 2 \equiv 0\bmod 4 \Leftrightarrow p \text{ splits completely in }\Q(\zeta_8)\text{ but not in }\Q(\zeta_{8}, \sqrt[4]{2})$;
\item $h^+(8p) = h(8p) \equiv 4\bmod 8 \Leftrightarrow p \text{ splits completely in }\Q(\zeta_8, \sqrt[4]{2})\text{ but not in }\Q(\zeta_{16}, \sqrt[4]{2})$;
\item $h^+(8p)\equiv 0\bmod 8 \Leftrightarrow p \text{ splits completely in }\Q(\zeta_{16}, \sqrt[4]{2})$.
\end{itemize}
Hence, Theorem~\ref{Thmii} in conjunction with the \v{C}ebotarev Density Theorem implies that
$$
\lim_{X\rightarrow \infty}\frac{\#\{p \text{ prime}: p\leq X, p\equiv 1\bmod 4, h(8p)\equiv 4\bmod 8\}}{\#\{p \text{ prime}: p\leq X, p\equiv 1\bmod 4\}} = \frac{3}{16}
$$
and 
$$
\lim_{X\rightarrow \infty}\frac{\#\{p \text{ prime}: p\leq X, p\equiv 1\bmod 4, h(8p)\equiv 0\bmod 8\}}{\#\{p \text{ prime}: p\leq X, p\equiv 1\bmod 4\}} = \frac{1}{16}.
$$

The 2-torsion subgroup $\C(8p)[2]$ is generated by the classes of the ramified ideals $\pt$ and $\pp$ lying above $2$ and $p$, respectively. Since the $2$-part of $\C(8p)$ is cyclic, we have $\#\C(8p)[2] = 2$, so exactly one of the three ideals $\pt$, $\pp$, and $\pt\pp$ is in the trivial class in $\C(8p)$, while the remaining two are both in the non-trivial class in $\C(8p)[2]$. Moreover, \eqref{nPell} has a solution over the integers if and only if $\Z[\sqrt{2p}]$ has a unit of norm $-1$, which occurs if and only if the ideal $\pt\pp = (\sqrt{2p})$ can be generated by a totally positive element in $\Z[\sqrt{2p}]$, i.e., if and only if $\pt\pp$ is in the trivial class in $\C(8p)$. Stevenhagen conjectured in \cite{SteConj} that as $p$ varies over all prime numbers, each of $\pt$, $\pp$, and $\pt\pp$ is in the trivial class in $\C(8p)$ equally often, which is why we expect $\lim_{X\rightarrow \infty}\delta^-(X)$ to exist and equal to $1/3$ ($\delta^-(X)$ is defined following \eqref{nPell}).

Since $\Z[\sqrt{2p}]$ has a unit of norm $-1$ if and only if the narrow class group $\C(8p)$ coincides with the ordinary class group $\CL(8p)$, we can obtain successively better upper and lower bounds for the proportion of primes $p$ for which \eqref{nPell} is solvable over $\Z$ by comparing $h^+(8p)$ and $h(8p)$ modulo successively higher powers of $2$. Note that \eqref{nPell} has no solutions (even over $\Q$) whenever $p\equiv 3\bmod 4$, since in that case $-1$ is not a quadratic residue modulo $p$. From this, the list of splitting criteria above, and the \v{C}ebotarev Density Theorem, one immediately deduces that
\begin{equation}\label{oldbounds}
\frac{5}{16}\leq \liminf_{X\rightarrow\infty}\delta^-(X) \leq \limsup_{X\rightarrow\infty}\delta^-(X) \leq \frac{3}{8}.
\end{equation}
Hence $|\delta^{-}(X) - 1/3| \leq 1/24 + o(X)$ as $X\rightarrow \infty$, i.e., at worst, the bounds above are within $4.17\%$ of Stevevenhagen's Conjecture. Theorem~\ref{Thmiii} hence cuts the possible discrepancy from Stevenhagen's conjecture in half. Although the problem of improving \eqref{oldbounds} may have been first explicitly stated in 1993 in \cite[p.\ 127]{SteConj}, in essence it has been open since the 1930's, when R\'{e}dei~\cite{Redei}, Reichardt~\cite{Reichardt}, and Scholz~\cite{Scholz} supplied the algebraic criteria sufficient to deduce \eqref{oldbounds}.

\subsection{Other results on $2$-parts of class groups of number fields}
Finally, we would like to contrast our results concerning one-prime-parameter families with results on $2$-parts of class groups in families parametrized by arbitrarily many primes. The first significant achievement for families with arbitrary discriminants was made by Fouvry and Kl\"{u}ners \cite{FK2}, who translated R\'{e}dei's theory on $4$-ranks of class groups to sums of characters conducive to analytic techniques and then successfully dealt with these sums, basing some of their work on the techniques developed by Heath-Brown in \cite{HB1, HB2}. Fouvry and Kl\"{u}ners subsequently developed their methods in various settings \cite{FK3, FK4, FK5, FK6}, most notably obtaining impressive upper and lower bounds for the solvability of the negative Pell equation $x^2 - dy^2 = -1$ for general squarefree integers $d>0$. When specialized to the one-prime-parameter family $d = 2p$ with $p$ prime, their results are as strong as the bounds in \eqref{oldbounds}, so Theorem~\ref{Thmiii} can be viewed as the next natural step in the line of work initiated by Fouvry and Kl\"{u}ners.

A recent paper of Smith~\cite{Smith2} (see also \cite{Smith1}) features ground-breaking distribution theorems about $2^k$-ranks of class groups of imaginary quadratic fields for \textit{all} $k\geq 3$. The very deep methods that underlie these theorems require the number of prime parameters on average to go to infinity and hence are unlikely to yield results in the direction of Theorems~\ref{Thmi}, \ref{Thmii}, or \ref{Thmiii}; from the standpoint of analytic number theory, Theorem~\ref{Thmi} is a result about the distribution of prime numbers, while the main analytic techniques underlying the results of \cite{Smith2} are consequences of a very careful study of the anatomy of the prime divisors of highly composite integers.

\subsection*{Acknowledgments}
The authors would like to thank Jan-Hendrik Evertse, \'{E}tienne Fouvry, Zev Klagsbrun, Carlo Pagano, and Peter Stevenhagen for useful discussions related to this work. The first author is a doctoral student at Leiden University. The second author was supported by an ALGANT Erasmus Mundus Scholarship, National Science Foundation agreement No.\ DMS-1128155, and European Research Council grant agreement No.\ 670239.

\section{Preliminaries}

\subsection{The governing field for the $8$-rank of $\CL(-4p)$}\label{GovField}
As in Section~\ref{Introduction}, let $M = \Q(\zeta_8, \sqrt{1+i})$ be the (minimal) governing field for the $8$-rank in the family $\{\Q(\sqrt{-4p})\}_{p\equiv 1\bmod 4}$. Using a computer algebra package such as Sage, one can readily check that
\begin{enumerate}
	\item[(P1)] the ring of integers of every subfield of $M$ (including $M$ itself) is a principal ideal domain,
	\item[(P2)] the discriminant $\Delta_M$ of $M/\Q$ is equal to $2^{22}$, and $2$ is totally ramified in $M/\Q$, and
	\item[(P3)] the torsion subgroup of the group of units in $\OO_M$ is $\left\langle \zeta_8 \right\rangle$.
\end{enumerate}
Recall that $\rk_8\CL(-4p) = 1$ if and only if $p$ splits completely in $M/\Q$, that is, if and only if $p$ is odd and every prime ideal $\pp$ in $\OO_M$ lying over $p$ is of degree $1$.

As noted in Section~\ref{Introduction}, $M/\Q$ is a normal extension with Galois group isomorphic to the dihedral group $D_8$ of order $8$. We fix an automorphism $r\in \Gal(M/\Q)$ such that $r$ generates the order $4$ subgroup $\Gal(M/\Q(\sqrt{-2}))$, and we let $s\in\Gal(M/\Q)$ be the non-trivial automorphism fixing the subfield $K_1 = \Q(i, \sqrt{1+i})$. Then $D_8 \cong\Gal(M/\Q)\cong \left\langle r, s\right\rangle$, with $r$ of order $4$, $s$ of order $2$, and $sr = r^3s$. Hereinafter, we refer to the following field diagram.
\begin{center}
\begin{tikzpicture}
  \draw (0, 0) node[]{$\Q$};
  \draw (0, 2) node[]{$\Q(\sqrt{-2})$};
	\draw (-2, 2) node[]{$\Q(i)$};
	\draw (2, 2) node[]{$\Q(\sqrt{2})$};
  \draw (0, 4) node[]{$\Q(\zeta_8)$};
	\draw (0, 6) node[]{$M$};
	\draw (-2, 4) node[]{$K_1$};
	\draw (-4, 4) node[]{$\cdot$};
	\draw (2, 4) node[]{$\cdot$};
	\draw (4, 4) node[]{$\cdot$};
	\draw (0, 0.3) -- (0, 1.7);
	\draw (0, 2.3) -- (0, 3.7);
	\draw (0, 4.3) -- (0, 5.7);
	\draw (0.3, 0.3) -- (1.7, 1.7);
	\draw (-0.3, 0.3) -- (-1.7, 1.7);
	\draw (1.7, 2.3) -- (0.3, 3.7);
	\draw (-1.7, 2.3) -- (-0.3, 3.7);
	\draw (2, 2.3) -- (2, 3.9);
	\draw (2.3, 2.3) -- (3.9, 3.9);
	\draw (1.9, 4.1) -- (0.2, 5.7);
	\draw (3.9, 4.1) -- (0.3, 5.8);
	\draw (-2, 2.3) -- (-2, 3.7);
	\draw (-2.3, 2.3) -- (-3.9, 3.9);
	\draw (-1.8, 4.2) -- (-0.2, 5.7);
	\draw (-3.9, 4.1) -- (-0.3, 5.8);
	\draw (0.4, 4.8) node[]{$\left\langle r^2\right\rangle$};
	\draw (-0.9, 4.6) node[]{$\left\langle s\right\rangle$};
	\draw (2.7, 5.1) node[]{$\left\langle rs\right\rangle$};
\end{tikzpicture}
\end{center} 
By the \v{C}ebotar\"{e}v Density Theorem, for each $\rho \in (\OO_M/(\Delta_M))^\times$, we can choose an inverse $\rho' \in \OO_M$ such that $\rho'\OO_M$ is a prime of degree one. Fix a set of such $\rho'$ and call it $\RRR$. Define $F$ to be the rational integer
\begin{equation}\label{defF}
F = \Delta_M\cdot \prod_{\rho \in (\OO_M/(\Delta_M))^\times}\Norm_{M/\Q}(\rho').
\end{equation}
This is not really analogous to $F$ on \cite[p.\ 723]{FIMR}, but we denote it by the same letter because it will play an analogous role later on in the estimation of certain congruence sums. 

\subsection{Quadratic Reciprocity}\label{sQR}
Let $L$ be a number field and let $\OO_L$ be its ring of integers. We say that an ideal $\aaa$ in $\OO_L$ is \textit{odd} if $\Norm(\aaa)$ is odd; similarly, an element $\alpha$ in $\OO_L$ is called \textit{odd} if the principal ideal generated by $\alpha$ is odd. If $\pp$ is an odd prime ideal in $\OO_L$, and $\alpha$ is an element in $\OO_L$, then one defines
$$
\left(\frac{\alpha}{\pp}\right)_{L} =
\begin{cases}
0 & \text{if }\alpha\in\pp \\
1 & \text{if }\alpha\notin\pp\text{ and }\alpha\text{ is a square modulo }\pp \\
-1 & \text{otherwise.}
\end{cases}
$$
If $\bb$ is an odd ideal in $\OO_L$, one defines
$$
\left(\frac{\alpha}{\bb}\right)_{L} = \prod_{\pp^{k_{\pp}}\|\bb}\left(\frac{\alpha}{\pp}\right)_{L}^{k_{\pp}}.
$$
If $\alpha, \beta\in\OO_L$ with $\beta$ odd, we define 
$$
\left(\frac{\alpha}{\beta}\right)_L = \left(\frac{\alpha}{\beta\OO_L}\right)_L.
$$
A weak (but sufficient to us) version of the law of quadratic reciprocity for number fields can be stated as follows (see for instance \cite[Lemma 2.1, p.\ 703]{FIMR}).
\begin{lemma}\label{tQR}
Suppose $L$ is a totally complex number field, and let $\alpha, \beta \in \OO_L$ be odd. Then
$$
\left(\frac{\alpha}{\beta}\right)_L = \ve \cdot \left(\frac{\beta}{\alpha}\right)_L,
$$
where $\ve\in\{\pm 1\}$ depends only on the congruence classes of $\alpha$ and $\beta$ modulo $8\OO_L$. $\Box$
\end{lemma}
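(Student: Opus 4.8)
The plan is to deduce the lemma from Hilbert's reciprocity law for the quadratic Hilbert symbol, in the spirit of \cite[Lemma 2.1, p.\ 703]{FIMR}. First I would dispose of the degenerate case: if the ideals $\alpha\OO_L$ and $\beta\OO_L$ have a common prime divisor $\pp$ --- necessarily odd, since $\alpha$ and $\beta$ are odd --- then $\alpha\in\pp$ and $\beta\in\pp$, so both $\left(\frac{\alpha}{\beta}\right)_L$ and $\left(\frac{\beta}{\alpha}\right)_L$ vanish and the asserted identity holds with any $\ve\in\{\pm1\}$. Hence I may assume that $\alpha\OO_L$ and $\beta\OO_L$ are coprime.

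Next I would rewrite the two quadratic residue symbols as finite products of local Hilbert symbols. Write $(\cdot,\cdot)_\pp$ for the quadratic Hilbert symbol at a place $\pp$ of $L$. For an odd prime $\pp$ and a unit $u$ of $\OO_{L_\pp}$, the tame-symbol formula gives $(u,\gamma)_\pp = \left(\frac{u}{\pp}\right)_L^{v_\pp(\gamma)}$ for all $\gamma\in\OO_L$. Since $\alpha$ is a unit at every prime $\pp\mid\beta$ and $\beta$ is a unit at every prime $\pp\mid\alpha$ (by coprimality), multiplying these identities over the relevant primes gives $\left(\frac{\alpha}{\beta}\right)_L = \prod_{\pp\mid\beta}(\alpha,\beta)_\pp$ and, by the symmetry of the quadratic Hilbert symbol, $\left(\frac{\beta}{\alpha}\right)_L = \prod_{\pp\mid\alpha}(\alpha,\beta)_\pp$. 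I would then apply Hilbert reciprocity, $\prod_{\pp}(\alpha,\beta)_\pp = 1$ over all places $\pp$ of $L$: the archimedean places contribute trivially because $L$ is totally complex, and a finite prime $\pp\nmid 2\alpha\beta$ contributes trivially because $\alpha$ and $\beta$ are then units at an odd prime. As $\alpha$ and $\beta$ are odd and coprime, the primes dividing $\alpha$, those dividing $\beta$, and those dividing $2$ form three pairwise disjoint sets, so the product formula collapses to
$$\left(\frac{\alpha}{\beta}\right)_L\cdot\left(\frac{\beta}{\alpha}\right)_L\cdot\prod_{\pp\mid 2}(\alpha,\beta)_\pp = 1,$$
and since all symbols lie in $\{\pm1\}$ this is exactly the claimed identity with $\ve := \prod_{\pp\mid 2}(\alpha,\beta)_\pp$.

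Finally I would check that $\ve$ depends only on $\alpha$ and $\beta$ modulo $8\OO_L$, which I expect to be the one place calling for care. Fix a prime $\pp\mid 2$ and set $e = v_\pp(2)\geq 1$, so that $v_\pp(8) = 3e\geq 2e+1$. If $\alpha\equiv\alpha'\bmod 8\OO_L$ with $\alpha,\alpha'$ both units at $\pp$, then $\alpha/\alpha' - 1\in\pp^{3e}\OO_{L_\pp}$, and applying Hensel's lemma to $x^2 - \alpha/\alpha'$, whose derivative has $\pp$-adic valuation $e$, shows that $\alpha/\alpha'$ is a square in $L_\pp^\times$; by bimultiplicativity, $(\alpha,\beta)_\pp = (\alpha',\beta)_\pp$, and likewise in the second argument. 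Taking the product over the finitely many primes above $2$ shows that $\ve$ is a function of the residue classes of $\alpha$ and $\beta$ modulo $8\OO_L$ alone. The substance of the lemma is thus entirely the invocation of global quadratic reciprocity, and the only genuinely delicate point is this last, elementary, local computation at the primes above $2$; in particular, the hypothesis that $L$ be totally complex is what makes the archimedean contributions disappear and is essential for the clean dependence on residues modulo $8\OO_L$.
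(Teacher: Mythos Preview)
Your argument is correct and is the standard deduction of quadratic reciprocity in a number field from Hilbert's product formula, exactly as in \cite[Lemma~2.1]{FIMR}. Note that the paper does not supply its own proof of this lemma: the statement is followed by a $\Box$ and is simply attributed to \cite{FIMR}, so there is nothing to compare beyond the fact that you have written out the proof that the cited reference contains.
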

When $\alpha$ is not odd, the following supplement to the law of quadratic reciprocity will suffice for our purposes (see \cite[Proposition 2.2, p.\ 703]{FIMR}).
\begin{lemma}\label{mod8}
Let $L$ be a totally complex number field, and let $\alpha \in \OO_L$ be non-zero. Then $\left(\frac{\alpha}{\beta}\right)_L$ depends only on the congruence class of $\beta$ modulo $8\alpha\OO_L$. $\Box$
\end{lemma}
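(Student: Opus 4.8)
\textbf{Proof proposal for Lemma~\ref{mod8}.}

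The plan is to reduce to the prime-ideal case and then invoke the fact that the Legendre-type symbol $\left(\frac{\alpha}{\pp}\right)_L$ is computed by raising $\alpha$ to the power $\frac{\Norm(\pp)-1}{2}$ inside the residue field $\OO_L/\pp$, so it depends only on $\alpha\bmod\pp$. First I would fix $\alpha\neq 0$ and write $\beta_1\equiv\beta_2\bmod 8\alpha\OO_L$; I must show $\left(\frac{\alpha}{\beta_1}\right)_L = \left(\frac{\alpha}{\beta_2}\right)_L$. If $\beta_i$ is not odd the symbol is defined (via the prime factorisation) only when $\Norm(\beta_i)$ is odd, but since $\beta_1\equiv\beta_2\bmod 8\OO_L$ the two ideals $\beta_1\OO_L$ and $\beta_2\OO_L$ have the same parity, so either both symbols are defined and we proceed, or the statement is vacuous. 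Assuming both are odd, by multiplicativity of the symbol in the lower argument it suffices to treat the case $\beta_2 = \beta_1$ — so the real content is: for a fixed odd $\beta$, the symbol $\left(\frac{\alpha}{\beta}\right)_L$ is unchanged if $\alpha$ is replaced by any $\alpha'$ with $\alpha'\equiv\alpha\bmod 8\alpha\OO_L$. Wait — that is the wrong variable; let me restructure.

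Correctly: I fix $\alpha$ and vary $\beta$. The cleanest route is to invoke Lemma~\ref{tQR} to flip the symbol. Write $\alpha = u\gamma$ where $\gamma$ is the odd part of $\alpha$ and $u$ is supported on primes above $2$ (here I should be slightly careful: $\OO_L$ need not be a PID in general, but for the application $L$ is a subfield of $M$, which by (P1) is a PID; in any case one can localise or pass to ideals). By multiplicativity, $\left(\frac{\alpha}{\beta}\right)_L = \left(\frac{u}{\beta}\right)_L\left(\frac{\gamma}{\beta}\right)_L$. For the odd factor, Lemma~\ref{tQR} gives $\left(\frac{\gamma}{\beta}\right)_L = \ve\left(\frac{\beta}{\gamma}\right)_L$ with $\ve$ depending only on $\gamma,\beta\bmod 8\OO_L$; and $\left(\frac{\beta}{\gamma}\right)_L$ depends only on $\beta\bmod\gamma\OO_L$ by the residue-field description above. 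Hence the odd contribution depends only on $\beta\bmod 8\gamma\OO_L$, which is implied by $\beta$ modulo $8\alpha\OO_L$ since $\gamma\OO_L\supseteq\alpha\OO_L$. For the even factor $\left(\frac{u}{\beta}\right)_L$, one reduces further to the supplementary laws for $\left(\frac{\zeta}{\beta}\right)_L$ where $\zeta$ ranges over a generating set for the $2$-part and torsion units — but more efficiently one observes directly that $\left(\frac{u}{\beta}\right)_L$ is a quadratic character in $\beta$ that factors through $(\OO_L/2^{k}\gamma'\OO_L)^\times$ for a suitable bounded power $2^k$ (the conductor of the Kummer extension $L(\sqrt{u})/L$ divides a power of $2$ times the odd radical of $u$, the latter being $1$), hence through $\beta\bmod 8\alpha\OO_L$. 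Combining the two factors finishes the argument.

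The main obstacle I anticipate is the bookkeeping at the prime $2$: pinning down that the conductor of $L(\sqrt{\alpha})/L$ at dyadic primes is bounded by the explicit modulus $8\OO_L$ rather than some unspecified $2^k\OO_L$. This is a standard fact in the arithmetic of quadratic extensions of number fields (it is exactly what makes the $\bmod\,8$ in Lemma~\ref{tQR} sharp in the totally complex case), but spelling it out cleanly requires either a ramification computation in the completions $L_\pp$ for $\pp\mid 2$, or a citation. Since Lemma~\ref{tQR} already packages the totally-complex-specific part of this computation, the honest way to present the proof is to black-box both the residue-field evaluation of the Legendre symbol and the conductor bound, cite \cite[Proposition 2.2, p.\ 703]{FIMR} for the precise modulus, and give the multiplicative reduction above as the only genuine step. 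Everything else is routine. $\Box$
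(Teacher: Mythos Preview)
The paper does not give a proof of this lemma at all: it is stated with a $\Box$ and simply referred to \cite[Proposition~2.2, p.~703]{FIMR}. So there is no ``paper's own proof'' to compare against beyond the citation.

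Your sketch is along reasonable lines (flip the odd part with Lemma~\ref{tQR}, control the dyadic part via a conductor bound), and in the end you yourself conclude that the honest presentation is to cite \cite[Proposition~2.2]{FIMR}. That is exactly what the paper does. Two remarks on the argument as written, though. First, the factorisation $\alpha = u\gamma$ into an even and an odd element need not exist in $\OO_L$ for a general totally complex $L$; your parenthetical restriction to subfields of $M$ (which are PIDs by (P1)) rescues the application but not the lemma as stated. One can repair this by working with ideals rather than elements, or by arguing directly via the Artin map for $L(\sqrt{\alpha})/L$. Second, and more substantively, the assertion that the dyadic conductor of $L(\sqrt{u})/L$ is bounded by $8\OO_L$ (rather than merely some $2^k\OO_L$) is exactly the content of the lemma that remains unproved in your sketch; you correctly identify this as the main obstacle and defer it to the reference. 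So your proposal, once stripped down, reduces to the same citation the paper makes.
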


\subsection{Short character sums}\label{ConjectureSection}
Here we state the conjecture that we assume in the proof of Theorem~\ref{mainThm}. It stipulates power-savings in short character (modulo $q$) sums of length $q^{1/8}$ and is essentially the same as the case $n = 8$ of Conjecture~$C_n$ in \cite[p.\ 738]{FIMR}.
\begin{conjecture}\label{conjSCS}
There exist absolute constants $\delta>0$ and $C>0$ such that if $\chi$ is a non-principal real-valued Dirichlet character modulo a squarefree integer $q>2$ and $N<q^{1/8}$, then 
$$
\left| \sum_{M\leq n \leq M+N}{\chi(n)} \right| \leq C q^{\frac{1}{8}-\delta}
$$
for all integers $M$.
\end{conjecture}
We feel that Conjecture~\ref{conjSCS} is of a genuinely different nature than the arithmetic applications that follow. It is the oscillation of spins over the set of \textit{prime} ideals that yields the various arithmetic applications. In the sieving methods we use, proving oscillation of spins over prime ideals requires us to first prove oscillation over the set of \textit{all} ideals. There we encounter character sums in the number field $M$ that one wishes to relate to character sums in $\Q$, where oscillation of character sums is better understood. In passing from $M$ to $\Q$, one suffers from the fact that, in some fixed integral basis for $\OO_M$, a nicely chosen element of norm $X$ generally has coordinates of size $X^{1/8}$. Conductors of characters in question have size similar to the norm, while the length of character sums in question is essentially limited by the size of the coordinates. We also remark that thanks to the work of Burgess~\cite{Burgess1, Burgess2}, Conjecture~\ref{conjSCS} is known to be true when $1/8$ is replaced with any real number $\theta>1/4$, in which case the exponent $\delta$ and the constant $C$ depend on~$\theta$.

Instead of directly appealing to Conjecture~\ref{conjSCS}, we will instead need a corollary of Conjecture~\ref{conjSCS} for arithmetic progressions. For $q$ odd and squarefree, let $\chi_q$ be the real Dirichlet character $\left(\frac{\cdot}{q}\right)$. Following \cite[7., p.\ 924-925]{erratum} we will prove:

\begin{corollary}\label{conjSCSAP}
Assume Conjecture \ref{conjSCS}. Then there exist absolute constants $\delta>0$ and $C>0$ such that for all odd squarefree integers $q>1$, all integers $N<q^{\frac{1}{8}}$, all integers $M$, $l$ and $k$ satisfying $q \nmid k$ we have
$$
\left| \sum_{\substack{M \leq n \leq M + N \\ n \equiv l \bmod k}} \chi_q(n) \right| \leq C q^{\frac{1}{8}-\delta}.
$$
\end{corollary}

\begin{proof}
Write $n = km + l$. Then we have
\[
\chi_q(n) = \chi_{(q, k)}(l) \chi_{q/(q, k)}(k) \chi_{q/(q, k)}(m + r),
\]
where $r$ satisfies $kr \equiv l \bmod q/(q, k)$. It follows that
\[
\left| \sum_{\substack{M \leq n \leq M + N \\ n \equiv l \bmod k}} \chi_q(n) \right| \leq \left| \sum_{M' \leq m \leq M' + \frac{N}{k}} \chi_{q/(q, k)}(m) \right|,
\]
where $M' = (M - l)k^{-1} + r$. By our assumption $q \nmid k$, we see that $q/(q, k)$ is an odd squarefree integer greater than one. Hence $\chi_{q/(q, k)}$ is a non-principal real-valued Dirichlet character. Now apply Conjecture \ref{conjSCS}.
\end{proof}

\subsection{Vinogradov's method, after Friedlander, Iwaniec, Mazur, and Rubin}\label{SumsOverPrimes}
Vinogradov's method \cite{Vino1, Vino2} has been substantially simplified by Vaughan~\cite{Vaughan}, and Friedlander et al. \cite[Section 5, p.\ 717-722]{FIMR} gave a nice generalization to number fields. Morally speaking, power-saving estimates in sums over primes follow from power-saving estimates in linear congruence sums (sums of type I) and general bilinear sums (sums of type II). Precisely, by \cite[Proposition 5.2, p.\ 722]{FIMR} with $\vartheta = \delta/4$ and $\theta = 1/48$, Theorem~\ref{mainThm} is a direct consequence of the following two propositions.
\begin{prop}\label{typeIprop}
Assume Conjecture~\ref{conjSCS} holds with $\delta>0$. Then for all $\epsilon>0$, we have
$$
\sum_{\Norm(\aaa)\leq x,\ \mm|\aaa}s_{\aaa}\ll_{\epsilon} x^{1-\frac{\delta}{4}+\epsilon}
$$
uniformly for all non-zero ideals $\mm$ of $\OO_M$ and all $x\geq 2$.
\end{prop}
\begin{prop}\label{typeIIprop}
For each $\epsilon>0$, there exists a constant $c_{\epsilon}>0$ such that 
$$
\sum_{\Norm(\aaa)\leq M}\sum_{\Norm(\bb)\leq N}v_{\aaa}w_{\bb}s_{\aaa\bb}\ll_{\epsilon} (M+N)^{\frac{1}{48}}(MN)^{\frac{47}{48}+\epsilon}
$$
uniformly for all $M, N\geq 2$ and all sequences of complex numbers $\{v_{\aaa}\}$ and $\{w_{\bb}\}$ satisfying $|v_{\aaa}|, |w_{\aaa}| \leq c_{\epsilon}\Norm(\aaa)^{\epsilon}$.
\end{prop}
Note that Proposition~\ref{typeIIprop} is \textit{unconditional} -- it is only for the sums of type I featuring in Proposition~\ref{typeIprop} that we have to assume Conjecture~\ref{conjSCS}. The proof of Proposition~\ref{typeIIprop} is rather standard at this point; similar results in slightly different settings can be found in \cite{FI1, FIMR, Milovic2, Milovic3, KM2}, among others. The substantially more difficult proof of Proposition~\ref{typeIprop} requires us to make a genuine improvement to the argument of Friedlander et al.\ \cite[Section~6]{FIMR}.

\subsection{A fundamental domain for the action of $\OO_M^{\times}$}\label{sDomain}
In the definition of $s_{\aaa}$ in \eqref{defan}, we chose a generator $\alpha$ for the ideal $\aaa$. As we will see in the proofs of Propositions \ref{typeIprop} and \ref{typeIIprop}, when summing over multiple ideals $\aaa$, it will be useful to work with a compatible set of generators. Here we present a suitable set of such generators, given by a standard fundamental domain for the action of $\OO_M^{\times}$ on $\OO_M$.

Recall that $\OO_M^{\times} = \left\langle \zeta_8\right\rangle\times V$, where $V$ is free of rank $3$. The group $V$ acts on $\OO_M$ by multiplication, i.e., there is an action 
$$
\Psi: V\times\OO_M\rightarrow\OO_M
$$
given by $\Psi(\mu, \alpha) = \mu\alpha$. Up to units of finite order, the orbits of $\Psi$ correspond to ideals in $\OO_M$.

Fix an integral basis for $\OO_M$, say $\eta = \{\eta_1,\ldots, \eta_8\}$. If $\alpha = a_1\eta_1+\cdots+a_8\eta_8\in\OO_M$ with $a_i\in\Z$, we call $a_i$ the coordinates of $\alpha$ in the basis $\eta$. The ideal in $\OO_M$ generated by $\alpha$ is also generated by $\mu\alpha$ for any unit $\mu\in V$. As $V$ is infinite, one can choose $\mu$ so that the coordinates of $\mu\alpha$ in the integral basis $\eta$ are arbitrarily large. The following classical result ensures that one can choose $\mu$ so that the coordinates of $\mu\alpha$ are reasonably small.
\begin{lemma}\label{fundDom}
There exists a subset $\DD$ of $\OO_M$ such that:
\begin{enumerate}
	\item $\DD$ is a fundamental domain for the action $\Psi$, i.e., for all $\alpha\in\OO_M$, there exists a unique $\mu\in V$ such that $\mu\alpha\in\DD$; and
	\item every non-zero ideal $\aaa$ in $\OO_M$ has exactly $8$ generators in $\DD$; if $\alpha$ is one such generator, then all such generators are of the form $\zeta_8^j \alpha$, where $j\in\{1, \ldots, 8\}$; and
	\item there exists a constant $C = C(\eta)>0$ such that for all $\alpha\in\DD$, the coordinates $a_i$ of $\alpha$ in the basis $\eta$ satisfy $|a_i|\leq C\cdot\Norm(\alpha)^{\frac{1}{8}}$.
\end{enumerate}
\end{lemma}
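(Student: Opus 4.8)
The plan is to build $\DD$ from a bounded fundamental parallelepiped for the unit lattice living inside the trace-zero hyperplane of the logarithmic Minkowski embedding; this is the classical geometry-of-numbers packaging of Dirichlet's unit theorem, as in \cite[Ch.\ 6]{Marcus}, and the only extra bookkeeping is to pin down that the exponent that appears is exactly $\tfrac{1}{8}$ and that the number of generators of an ideal landing in $\DD$ is exactly $8$.

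First I would set up the embedding. Since $\zeta_8\in M$, the field $M$ is totally imaginary of degree $8$, hence has exactly four complex places; fix embeddings $\sigma_1,\dots,\sigma_4\colon M\hookrightarrow\mathbb{C}$, one from each conjugate pair, and define $\lambda\colon M^{\times}\to\mathbb{R}^4$ by $\lambda(x)=(\log|\sigma_1(x)|,\dots,\log|\sigma_4(x)|)$. Then $\lambda(vx)=\lambda(v)+\lambda(x)$, and from $\log\Norm(x)=2\sum_{j=1}^4\log|\sigma_j(x)|$ one sees that $\Lambda:=\lambda(\OO_M^{\times})$ lies in the hyperplane $H:=\{y\in\mathbb{R}^4:\sum_j y_j=0\}$; by Dirichlet's unit theorem $\Lambda$ is a lattice of full rank $3$ in $H$. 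Because $\lambda(\zeta_8)=0$ we get $\Lambda=\lambda(V)$, and because $\ker\lambda\cap\OO_M^{\times}$ consists of roots of unity (Kronecker) while $V\cap\langle\zeta_8\rangle=\{1\}$, the map $\lambda$ restricts to an isomorphism $V\xrightarrow{\ \sim\ }\Lambda$.

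Next I would split $\mathbb{R}^4=H\oplus\mathbb{R}\cdot(1,1,1,1)$ and write $\lambda_H(x)$ for the $H$-component of $\lambda(x)$, so that the component along $(1,1,1,1)$ is $\tfrac18\log\Norm(x)$ and hence $\lambda(x)_j=\lambda_H(x)_j+\tfrac18\log\Norm(x)$ for each $j$. I would fix a $\mathbb{Z}$-basis $b_1,b_2,b_3$ of $\Lambda$, take the bounded set $\Phi:=\{t_1b_1+t_2b_2+t_3b_3:0\le t_i<1\}$ (a fundamental domain for the translation action of $\Lambda$ on $H$), and set
$$
\DD:=\{x\in\OO_M\setminus\{0\}:\lambda_H(x)\in\Phi\}.
$$
For property (1), given nonzero $x$ and $v\in V$ one has $\lambda_H(vx)=\lambda(v)+\lambda_H(x)$ since $\lambda(v)\in\Lambda\subseteq H$, so there is exactly one $\mu\in\Lambda$ with $\lambda_H(x)+\mu\in\Phi$, hence exactly one $v\in V$ with $vx\in\DD$. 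For property (2), since $\OO_M$ is a PID by (P1) each nonzero ideal $\nn$ has a generator $x_0$, its generators are precisely the $ux_0$ with $u\in\OO_M^{\times}=\langle\zeta_8\rangle\times V$, and $\lambda_H$ is unchanged by multiplication by roots of unity; so by (1) the generators of $\nn$ lying in $\DD$ are exactly the $8$ distinct elements $\zeta\cdot(vx_0)$ with $\zeta\in\langle\zeta_8\rangle$, where $v$ is the unique element of $V$ with $vx_0\in\DD$.

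Finally, for property (3): if $x\in\DD$ then $\|\lambda_H(x)\|_\infty\le R$ for a constant $R=R(\eta)$ depending only on the chosen $b_i$, so $|\sigma_j(x)|\le e^{R}\,\Norm(x)^{1/8}$ for all $j$; and since the coordinates $x_i$ of $x$ in the integral basis $\eta$ are recovered from $(\sigma_1(x),\overline{\sigma_1(x)},\dots,\sigma_4(x),\overline{\sigma_4(x)})$ by a fixed invertible linear map depending only on $\eta$ (the inverse of the matrix $(\tau(\eta_i))$, invertible as its determinant squared is $\Delta_M\neq 0$), one gets $|x_i|\ll_{\eta}\max_j|\sigma_j(x)|\ll_{\eta}\Norm(x)^{1/8}$. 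I do not expect a genuine obstacle: the argument is entirely classical and the only points needing care are that the exponent forced on us is exactly $1/[M:\mathbb{Q}]=\tfrac18$ (which is why one separates off the norm direction) and that the generator count is exactly $|\langle\zeta_8\rangle|=8$, which is where property (P4) is used.
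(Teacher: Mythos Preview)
Your proposal is correct and is precisely the classical construction the paper has in mind: the paper does not actually prove this lemma but simply records it as a consequence of \cite[Chapter~6, p.\ 158--181]{Marcus}, and your argument is exactly the standard geometry-of-numbers construction carried out there (logarithmic embedding, fundamental parallelepiped for the unit lattice in the trace-zero hyperplane, then linear algebra to pass from archimedean absolute values to coordinates in a fixed integral basis). One cosmetic point: your constant $R$ depends on the chosen basis $b_1,b_2,b_3$ of $\Lambda$ (equivalently on the fixed $V$), not on $\eta$; the dependence on $\eta$ enters only in the final linear change of variables.
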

For a proof, see \cite{KM2}, based on \cite[Lemma 1, p.\ 131]{LangANT}. We are now ready to prove Propositions \ref{typeIprop} and \ref{typeIIprop}, thereby proving Theorem~\ref{mainThm}.

\section{Proof of Theorem~\ref{mainThm}}\label{SectionMainThm}
As mentioned in Section~\ref{SumsOverPrimes}, thanks to \cite[Proposition 5.2, p.\ 722]{FIMR}, Theorem~\ref{mainThm} reduces to proving the appropriate estimates for sums of type I and sums of type II. 

\subsection{Sums of type I}\label{PROOFtypeIprop}
In this section, we prove Proposition~\ref{typeIprop}. Define $F$ as in \eqref{defF}. We recall that we fixed a rank $3$ subgroup $V$ of $\OO_M$ and a set of representatives $\mu_1, \ldots, \mu_{8}$ for $V/V^2$. Let $\mathfrak{m}$ be an ideal of $\OO_M$ coprime with $F$. Recall the definition of $s_{\aaa}$ in \eqref{defan}. After using Lemma~\ref{fundDom} to transform a sum over ideals in $\OO_M$ to a sum over elements in the fundamental domain $\DD$, our goal becomes to bound the following sum
\[
A(x) = \frac{1}{64} \sum_{\substack{\Norm(\mathfrak{a}) \leq x \\ (\mathfrak{a}, F) = 1,\ \mathfrak{m} \mid \mathfrak{a}}} \sum_{i = 1}^{8}\sum_{j = 1}^{8} [\mu_i\zeta_8^j \alpha] =  \frac{1}{64} \sum_{i = 1}^{8} \sum_{\substack{\alpha \in \DD; \Norm(\alpha) \leq x \\ (\alpha, F) = 1, \mathfrak{m} \mid \alpha}} [\mu_i \alpha],
\]
where, for convenience of notation, we have set $[\beta] = \psi(\beta \bmod F)[\beta]_r$ for $\beta\in\OO_M$. The rough strategy of our proof will be the same as the strategy in \cite[Section~6]{FIMR}, although we will have to make the appropriate adjustments in numerous places. We can simplify several steps thanks to the special properties of the field $M$ as described in Section~\ref{GovField}. At some point, however, the strategy of \cite[Section~6]{FIMR} will no longer suffice, and we will need a new ingredient.

By making changes of variables $\alpha\mapsto \mu_i^{-1}\alpha$, we rewrite the sum above as
\[
A(x) = \frac{1}{64} \sum_{i = 1}^{8} \sum_{\substack{\alpha \in \mu_i \DD; \Norm(\alpha) \leq x \\ (\alpha, F) = 1, \mathfrak{m} \mid \alpha}} [\alpha]
\]
and after splitting the sum into congruence classes modulo $F$, we get
\[
A(x) = \frac{1}{64} \sum_{i = 1}^{8} \sum_{\substack{\rho \bmod F; \\ (\rho, F) = 1}}  \psi(\rho) A(x; \rho, \mu_i),
\]
where
\[
A(x; \rho, \mu_i) =\sum_{\substack{\alpha \in \mu_i \DD; \Norm(\alpha) \leq x \\ \alpha \equiv \rho \bmod F \\ \alpha \equiv 0 \bmod \mathfrak{m}}} [\alpha]_r.
\]
Our goal is to estimate $A(x; \rho, \mu_i)$ for each congruence class $\rho \bmod F$, $(\rho, F) = 1$ and unit $\mu_i$. As a $\Z$-module, the ring $\OO_M$ decomposes as $\OO_M = \Z \oplus \MM$, where $\MM$ is a free $\mathbb{Z}$-module of rank $7$, so that we can write
\[
\MM = \omega_2 \Z + \ldots + \omega_8 \Z
\]
for some $\omega_2, \ldots, \omega_8 \in \OO_M$. This means that $\alpha$ can be written uniquely as
\[
\alpha = a + \beta, \text{ with } a \in \Z, \beta \in \MM,
\]
so the four summation conditions above are equivalent to
\[
a + \beta \in \mu_i\DD, \quad \Norm(a + \beta) \leq x, \quad a + \beta \equiv \rho \bmod F, \quad a + \beta \equiv 0 \bmod \mathfrak{m}.
\]
Part 3 of Lemma \ref{fundDom} implies that the conjugates of $\beta$, say $\beta^{(i)}$ for $1\leq i\leq 8$, satisfy $|\beta^{(i)}| \ll x^{\frac{1}{8}}$ for any embedding $M\hookrightarrow \mathbb{C}$. Because our field $M$ and the integral basis $\{1, \omega_2, \ldots, \omega_8\}$ is fixed, the implied constant is absolute.

Perhaps the main step of \cite[Section~6]{FIMR} is a trick on page 725, which we use to rewrite $[\alpha]_r = \left(\frac{r(\alpha)}{\alpha}\right)$ as
\[
\left(\frac{r(\alpha)}{\alpha}\right) = \left(\frac{r(a + \beta)}{a + \beta}\right) = \left(\frac{r(\beta) - \beta}{a + \beta}\right).
\]
Morally speaking, this allows us to fix $\beta$ and vary $a$, thereby creating a genuine character sum in which the variable of summation does not depend on the conductor of the character. If $\beta = r(\beta)$, then $\beta$ does not contribute to the sum. So we can and will assume $\beta \neq r(\beta)$. By property (P1) in Section~\ref{GovField}, we can write
\[
r(\beta) - \beta = \eta^2 c_0 c
\]
with $c_0, c, \eta\in\OO_M$, $c_0 \mid F$ squarefree, $\eta \mid F^\infty$, and $(c, F) = 1$. Then
\[
\left(\frac{r(\beta) - \beta}{a + \beta}\right) = \left(\frac{\eta^2 c_0 c}{a + \beta}\right) = \left(\frac{c_0 c}{a + \beta}\right) = \left(\frac{c_0}{a + \beta}\right) \left(\frac{c}{a + \beta}\right)
\]
By Lemma \ref{mod8}, the factor $\left(\frac{c_0}{a + \beta}\right)$ depends only on the congruence class of $a + \beta$ modulo~$8 c_0$, and, as $c_0$ is squarefree and divides $F$, it depends only on $\rho$.

Next we claim that
\[
\left(\frac{c}{a + \beta}\right) = \ve_1 \cdot \left(\frac{a + \beta}{c}\right),
\]
where $\ve_1\in\{\pm 1\}$ depends only on $\rho$ and $\beta$. Indeed, $\rho$ determines the congruence class of $a + \beta$ modulo $8$ and $c$ depends only on $\beta$, so an application of Lemma \ref{tQR} proves the claim. Combining everything gives
\[
\left(\frac{r(\alpha)}{\alpha}\right) = \ve_2 \cdot \left(\frac{a + \beta}{c}\right),
\]
where $\ve_2 = \ve_2(\rho, \beta)\in\{\pm 1\}$ depends only on $\rho$ and $\beta$. Having rewritten $\left(\frac{r(\alpha)}{\alpha}\right)$ in a desirable form, we can now split $A(x; \rho, \mu_i)$ as follows
\begin{align*}
A(x; \rho, \mu_i) 
&= \sum_{\substack{\alpha \in \mu_i \DD; \Norm(\alpha) \leq x \\ \alpha \equiv \rho \bmod F \\ \alpha \equiv 0 \bmod \mathfrak{m}}} \left( \frac{r(\alpha)}{\alpha} \right)
= \sum_{\substack{a + \beta \in \mu_i \DD; \Norm(a + \beta) \leq x \\ a + \beta \equiv \rho \bmod F \\ a + \beta \equiv 0 \bmod \mathfrak{m}}} \left( \frac{r(a + \beta)}{a + \beta} \right) \\
&= \sum_{\beta \in \MM} \sum_{\substack{a \in \Z; \\ a + \beta \in \mu_i \DD, \Norm(a + \beta) \leq x \\ a + \beta \equiv \rho \bmod F \\ a + \beta \equiv 0 \bmod \mathfrak{m}}} \left( \frac{r(a + \beta)}{a + \beta} \right)
= \sum_{\beta \in \MM} \sum_{\substack{a \in \Z; \\ a + \beta \in \mu_i \DD; \Norm(a + \beta) \leq x \\ a + \beta \equiv \rho \bmod F \\ a + \beta \equiv 0 \bmod \mathfrak{m}}} \ve_2(\rho, \beta) \left(\frac{a + \beta}{c}\right) \\
&\leq \sum_{\beta \in \MM} |T(x; \beta, \rho, \mu_i)|,
\end{align*}
where $T(x; \beta, \rho, \mu_i)$ is defined as
\[
T(x; \beta, \rho, \mu_i) = \sum_{\substack{a \in \Z; \\ a + \beta \in \mu_i \DD, \Norm(a + \beta) \leq x \\ a + \beta \equiv \rho \bmod F \\ a + \beta \equiv 0 \bmod \mathfrak{m}}} \left(\frac{a + \beta}{c}\right).
\]
From now on we treat $\beta$ as fixed and estimate $T(x; \beta, \rho, \mu_i)$. Recall that $c$ is odd and hence no ramified prime can divide the ideal $(c) = c\OO_M$ by property (P2) in Section \ref{GovField}. This implies that $(c)$ can be factored as
\[
(c) = \mathfrak{g} \mathfrak{q},
\]
where, similarly as in \cite[(6.21), p.\ 727]{FIMR}, $\mathfrak{g}$ consists of all prime ideals dividing $(c)$ that are of degree greater than one or unramified primes of degree one for which some conjugate is also a factor of $(c)$. By construction $\mathfrak{q}$ consists of all the remaining primes dividing $c\OO_M$. Then $q := N\mathfrak{q}$ is a square-free integer and $g := N\mathfrak{g}$ is a squarefull number coprime with $q$. There exists a rational integer $b$ with $b \equiv \beta \bmod \mathfrak{q}$ by an application of the Chinese remainder theorem. Again, as $c$ depends on $\beta$ and not on $a$, so also $b$ is a rational integer that depends on $\beta$ and not on $a$. We get
\[
\left(\frac{a + \beta}{c}\right) = \left(\frac{a + \beta}{\mathfrak{g}}\right) \left(\frac{a + \beta}{\mathfrak{q}}\right) = \left(\frac{a + \beta}{\mathfrak{g}}\right) \left(\frac{a + b}{\mathfrak{q}}\right).
\] 
Define $g_0$ as the radical of $g$, i.e.,
\[
g_0 = \prod_{p \mid g} p.
\]
Note that the quadratic residue symbol $\left(\frac{\alpha}{\mathfrak{g}}\right)$ is periodic in $\alpha$ modulo $\mathfrak{g}^\ast = \prod_{\mathfrak{p} \mid \mathfrak{g}} \mathfrak{p}$. Since $\mathfrak{g}^\ast$ divides $g_0$, we conclude that the symbol $\left(\frac{a + \beta}{\mathfrak{g}}\right)$ is periodic of period $g_0$ as a function of $a\in\Z$. We split $T(x; \beta, \rho, \mu_i)$ into congruence classes modulo $g_0$, giving
\begin{equation}\label{eFin}
|T(x; \beta, \rho, \mu_i)| \leq \sum_{a_0 \bmod g_0} |T(x; \beta, \rho, \mu_i, a_0)|,
\end{equation}
where
$$
T(x; \beta, \rho, \mu_i, a_0) = \sum_{\substack{a \in \Z; \\ a + \beta \in \mu_i \DD, \Norm(a + \beta) \leq x \\ a + \beta \equiv \rho \bmod F \\ a + \beta \equiv 0 \bmod \mathfrak{m} \\ a \equiv a_0 \bmod g_0}} \left(\frac{a + b}{\mathfrak{q}}\right).
$$
Note that $a + \beta \in \mu_i \DD$ implies that $a \ll x^{\frac{1}{8}}$, where the implied constant depends only on one of the eight units $\mu_i$. The condition $\Norm(a + \beta) \leq x$ for fixed $\beta$ and $x$ is a polynomial inequality of degree $8$ in $a$. So the summation variable $a\in\Z$ runs over a collection of at most $8$ intervals whose endpoints depend on $\beta$ and $x$. But from $a \ll x^{1/8}$ we see that for the length $L$ of each such interval we have $L \ll x^{1/8}$.

Furthermore, the congruences $a + \beta \equiv \rho \bmod F$, $a + \beta \equiv 0 \bmod \mathfrak{m}$ and $a \equiv a_0 \bmod g_0$ mean that $a$ runs over a certain arithmetic progression of modulus $k$ which divides $g_0mF$, where $m := \Norm\mathfrak{m}$. Hence, we see that the inner sum in \eqref{eFin} can be rewritten as at most $8$ sums, each of which runs over an arithmetic progression of modulus $k$ in a single segment of length $\ll x^{1/8}$.

As $q = \Norm(\mathfrak{q})$ is squarefree, $\left(\frac{\cdot}{\mathfrak{q}}\right)$ is the real primitive Dirichlet character of modulus $q$, and hence we have at most $8$ incomplete character sums of length $\ll x^{\frac{1}{8}}$ and modulus $q \ll x$. When the modulus $q$ of the Dirichlet character divides the modulus $k$ of the arithmetic progression, one can not expect to get cancellation. For now we assume that $q \nmid k$, and we will deal with the case $q \mid k$ later on. Corollary~\ref{conjSCSAP} implies that
$$
T(x; \beta, \rho, \mu_i, a_0) \ll x^{\frac{1}{8} - \delta},
$$
and hence that
\begin{align}\label{eCon}
T(x; \beta, \rho, \mu_i) \ll g_0x^{\frac{1}{8} - \delta}.
\end{align}
Just as in \cite{FIMR}, the implied constant above does not depend on $\beta$ because Conjecture~\ref{conjSCS}, and so also Corollary~\ref{conjSCSAP}, encompasses all incomplete character sums of length $\ll x^{\frac{1}{8}}$, regardless of the endpoints of the interval being summed over.

We still need to deal with the case $q \mid k$. Certainly, this implies $q \mid m$. So (\ref{eCon}) holds if $q \nmid m$. Hence, by the definition of $(c)$ and the factorization $(c) = \mathfrak{g} \mathfrak{q}$, we have (\ref{eCon}) unless
\begin{align}
\label{eExp}
p \mid \Norm(\alpha - r(\alpha)) \implies p^2 \mid mF\Norm(\alpha - r(\alpha)).
\end{align}
We write $A_\square(x; \rho, \mu_i)$ for the contribution to $A(x; \rho, \mu_i)$ with (\ref{eExp}). We have
\[
A_\square(x; \rho, \mu_i) \leq |\{\alpha \in \mu_i \DD : N\alpha \leq x, \ p \mid \Norm(\alpha - r(\alpha)) \implies p^2 \mid mF\Norm(\alpha - r(\alpha))\}|.
\]
Decompose $\OO_M$ as
\[
\OO_M = \Z[\sqrt{-2}] \oplus \MM',
\]
where $\MM'$ is a free $\mathbb{Z}$-module of rank $6$. Then we get an injective map $\MM' \rightarrow \OO_M$ given by $\alpha \mapsto \alpha - r(\alpha)$. Since $\alpha \in \mu_i \DD$ and $\Norm(\alpha) \leq x$, we know that all the conjugates $|\alpha^{(k)}|$ are $\ll x^{1/8}$. If we write
\[
\alpha = a + b\sqrt{-2} + m'
\]
with $a, b \in \Z$ and $m' \in \MM'$, then it follows that $|a|, |b| \leq y$ and furthermore all the conjugates of $\gamma = \alpha - r(\alpha)$ satisfy $|\gamma^{(k)}| \leq y$ for some $y \asymp x^{\frac{1}{8}}$. Therefore, we have
\[
A_\square(x; \rho, \mu_i) \leq y^2 |\{\gamma \in \OO_M: |\gamma^{(k)}| \leq y, \ p \mid \Norm(\gamma) \implies p^2 \mid mF\Norm(\gamma)\}|.
\]
Since it is easier to count ideals than integers, we replace $\gamma$ by the principal ideal it generates. We remark that an ideal $\mathfrak{b}$ with $\Norm\mathfrak{b} \leq y^8$ has $\ll (\log y)^8$ generators satisfying $|\gamma^{(k)}| \leq y$ for all $k$. Hence
\[
A_\square(x; \rho, \mu_i) \ll x^{\frac{1}{4}} (\log x)^8 |\{\mathfrak{b} \subseteq \OO_M : \Norm\mathfrak{b} \leq y^8, \ p \mid \Norm\mathfrak{b} \implies p^2 \mid mF\Norm\mathfrak{b}\}|.
\]
Now we can use the multiplicative structure of the ideals in $\OO_M$, giving the bound
\[
A_\square(x; \rho, \mu_i) \ll x^{\frac{1}{4}} (\log x)^8 \sum_{\substack{b \leq y^8 \\ p \mid b \implies p^2 \mid mFb}} \tau(b),
\]
where $b$ runs over the positive rational integers and $\tau(b)$ counts the number of ideals in $M$ with norm $b$. Then we have $\tau(b) \ll b^\epsilon$. Note that we can assume $m \leq x$ because otherwise $A(x)$ is the empty sum. Hence, recalling that $y \asymp x^{\frac{1}{8}}$, we conclude that
\[
A_\square(x; \rho, \mu_i) \ll x^{\frac{3}{4} + \epsilon},
\]
where the implied constant depends only on $\epsilon$.

Define $A_0(x; \rho, \mu_i)$ to be the contribution of $A(x; \rho, \mu_i)$ of the terms $\alpha = a + \beta$ not satisfying (\ref{eExp}). We have
\[
A(x; \rho, \mu_i) = A_\square(x; \rho, \mu_i) + A_0(x; \rho, \mu_i).
\]
To estimate $A_0(x; \rho, \mu_i)$ we can use (\ref{eCon}) for every relevant $\beta$. Unfortunately, the bound (\ref{eCon}) is only good when $g_0$ is small. So we make the further partition
\[
A_0(x; \rho, \mu_i) = A_1(x; \rho, \mu_i) + A_2(x; \rho, \mu_i),
\]
where the components run over $\alpha = a + \beta$ with $\beta$ satisfying
\[
g_0 \leq Z \text{ in the sum } A_1(x; \rho, \mu_i),
\]
\[
g_0 > Z \text{ in the sum } A_2(x; \rho, \mu_i).
\]
Here $Z$ is at our disposal and we choose it later. It is here that we must improve on the bounds of \cite{FIMR}. In their proof they define three sums
\[
g_0 \leq Z \text{ in the sum } A_1(x; \rho, \mu_i),
\]
\[
g_0 > Z, g \leq Y \text{ in the sum } A_2(x; \rho, \mu_i),
\]
\[
g_0 > Z, g > Y \text{ in the sum } A_3(x; \rho, \mu_i),
\]
with $Z \leq Y$ at their disposal. Following the proof in \cite{FIMR} would give
\[
A_0(x; \rho, \mu_i) \ll x^\epsilon (Zx^{1 - \delta} + Y^{-\frac{1}{2}} x^{1 + \frac{1}{4}} + Z^{-1} \log Y x + Y^{\frac{5}{2}} x^{\frac{1}{4}}),
\]
and it is easily seen that there is no choice of $Z \leq Y$ that makes $A_0(x; \rho ,\mu_i) \ll x^{1 - \theta_1}$ for some $\theta_1 > 0$. Our proof is conceptually simpler and provides sharper bounds.

We estimate $A_1(x; \rho, \mu_i)$ as in \cite{FIMR} by using (\ref{eCon}) and summing over $\beta \in \MM$ satisfying $|\beta^{(1)}|, \ldots, |\beta^{(8)}| \ll x^{\frac{1}{8}}$ to obtain
\[
A_1(x; \rho, \mu_i) \ll Zx^{1 - \delta}.
\]
Our next goal is to estimate $A_2(x; \rho , \mu_i)$. We keep the condition $\alpha - r(\alpha) \equiv 0 \bmod \mathfrak{g}$, giving
\begin{align}
\label{eTail1}
|A_2(x; \rho, \mu_i)| \leq y^2 \sum_{\substack{\mathfrak{g} \\ g_0 > Z}} E_\mathfrak{g}(y),
\end{align}
where $y \asymp x^{1/8}$ and
\[
E_\mathfrak{g}(y) := |\{\gamma \in \MM'' : \gamma \equiv 0 \bmod \mathfrak{g}, |\gamma^{(k)}| \leq y \text{ for all } k\}|.
\]
Here $\MM''$ is by definition the image of $\MM'$ under the map $\beta \mapsto \beta - r(\beta)$. Let $\eta_3, \ldots, \eta_8$ be a $\mathbb{Z}$-basis of $\MM''$. We view $\MM'' \subseteq \mathbb{R}^6$ via $a_3 \eta_3 + \ldots + a_8 \eta_8 \mapsto (a_3, \ldots, a_8)$. In this way we identify $\MM''$ with $\mathbb{Z}^6$, so $\MM''$ becomes a lattice in $\mathbb{R}^6$. Furthermore, define $\Lambda_\mathfrak{g}$ as
\[
\Lambda_\mathfrak{g} := \{\gamma \in \MM'' : \gamma \equiv 0 \bmod \mathfrak{g}\}.
\]
Then it is easily seen that $\Lambda_\mathfrak{g}$ is a sublattice of $\MM''$.

We further define
\[
S_x = \{(a_3, \ldots, a_8) \in \mathbb{R}^6 : |a_i| \leq c_1x^{\frac{1}{8}}\},
\]
where the constant $c_1>0$ is taken large enough such that
\begin{align}
\label{eTail2}
E_\mathfrak{g}(y) \leq |S_x \cap \Lambda_\mathfrak{g}|.
\end{align}
Note that $S_x = x^{\frac{1}{8}}S_1$, which implies that $\text{Vol}(S_x) = x^{\frac{3}{4}} \text{Vol}(S_1)$. Because $S_1$ is a $6$-dimensional hypercube, it has $12$ sides. Hence there exist an absolute constant $L$ and functions $\varphi_1, \ldots, \varphi_{12}: [0, 1]^{5} \rightarrow \mathbb{R}^6$ satisfying a Lipschitz condition
\[
|\varphi_i(a) - \varphi_i(b)| \leq L |a - b|
\]
for $a, b \in [0, 1]^5$, $i = 1, \ldots, 12$ such that the boundary of $S_1$, denoted by $\partial S_1$, is covered by the images of the $\varphi_i$. Then $x^{\frac{1}{8}} \varphi_1, \ldots, x^{\frac{1}{8}} \varphi_{12}$ are Lipschitz functions for $\partial S_x = \partial x^{\frac{1}{8}} S_1 = x^{\frac{1}{8}} \partial S_1$. Hence we can choose $x^{\frac{1}{8}}L$ as the Lipschitz constant for $S_x$.

We now apply Theorem 5.4 of \cite{Widmer}, which gives
\begin{align}
\label{eWidmer}
\left| |S_x \cap \Lambda_\mathfrak{g}| - \frac{\text{Vol}(S_x)}{\det \Lambda_\mathfrak{g}}\right| \ll_L \max_{0 \leq i < 6} \frac{x^{\frac{i}{8}}}{\lambda_{\mathfrak{g}, 1} \cdot \ldots \cdot \lambda_{\mathfrak{g}, i}},
\end{align}
where $\lambda_{\mathfrak{g}, 1}, \ldots, \lambda_{\mathfrak{g}, 6}$ are the successive minima of $\Lambda_\mathfrak{g}$ and $\ll_L$ means that the implied constant may depend on $L$. Our next goal is to give a lower bound for $\lambda_{\mathfrak{g}, 1}$. 

So let $\gamma \in \Lambda_\mathfrak{g}$ be non-zero. Then $\mathfrak{g} \mid \gamma$ and hence $g \mid \Norm(\gamma)$. Write $\gamma = (a_3, \ldots, a_8)$. We fix some small $\epsilon > 0$. If $a_3, \ldots, a_8 \leq c_2 g^{\frac{1}{8} - \epsilon}$ for some sufficiently small absolute constant $c_2>0$, we obtain $\Norm(\gamma) < g$. Since $g \mid \Norm(\gamma)$, we conclude that $\Norm(\gamma) = 0$, contradiction. Hence there is an $i$ with $a_i > c_2g^{\frac{1}{8} - \epsilon}$. This implies that the length of $\gamma$ satisfies $||\gamma|| \gg g^{\frac{1}{8} - \epsilon}$ and therefore 
\begin{align}
\label{eM1}
\lambda_{\mathfrak{g}, 1} \gg g^{\frac{1}{8} - \epsilon}
\end{align}
By Minkowski's second theorem and (\ref{eM1}) we find that
\begin{align}
\label{eLD}
\det \Lambda_\mathfrak{g} \gg g^{\frac{3}{4} - 6\epsilon}.
\end{align}
Combining (\ref{eWidmer}), (\ref{eM1}) and (\ref{eLD}) gives
\begin{align}
\label{eTail3}
|S_x \cap \Lambda_\mathfrak{g}| \ll \frac{x^{\frac{3}{4}}}{g^{\frac{3}{4} - 6\epsilon}} + \frac{x^{\frac{5}{8}}}{g^{\frac{5}{8} - 5\epsilon}} \ll \frac{x^{\frac{3}{4}}}{g^{\frac{3}{4} - 6\epsilon}}.
\end{align}
Plugging (\ref{eTail2}) and (\ref{eTail3}) back in (\ref{eTail1}) gives
\[
|A_2(x; \rho, \mu_i)| 
\leq y^2 \sum_{\substack{\mathfrak{g} \\ g_0 > Z}} E_\mathfrak{g}(y) \leq y^2 \sum_{\substack{\mathfrak{g} \\ g_0 > Z}} |S_x \cap \Lambda_\mathfrak{g}| 
\ll \sum_{\substack{\mathfrak{g} \\ g_0 > Z}} \frac{x}{g^{\frac{3}{4} - 6\epsilon}}.
\]
We rewrite the last sum as
\begin{align*}
\sum_{\substack{\mathfrak{g} \\ g_0 > Z}} \frac{x}{g^{\frac{3}{4} - 6\epsilon}} 
&= x \sum_{\substack{g \leq x \\ g \text{ squarefull} \\ g_0 > Z}} \frac{\tau(g)}{g^{\frac{3}{4} - 6\epsilon}}
\ll x^{1 + \epsilon'} \sum_{\substack{g \leq x \\ g \text{ squarefull} \\ g_0 > Z}} \frac{1}{g^{\frac{3}{4} - 6\epsilon}} \\
&= x^{1 + \epsilon'} \sum_{\substack{g \leq x \\ g \text{ squarefull} \\ g_0 > Z}} g^{-\frac{1}{4} +6\epsilon} \frac{1}{g^{\frac{1}{2}}}
\leq x^{1 + \epsilon'} Z^{-\frac{1}{2} + 3\epsilon} \sum_{\substack{g \leq x \\ g \text{ squarefull} \\ g_0 > Z}} \frac{1}{g^\frac{1}{2}} \\
&\leq x^{1 + \epsilon'} Z^{-\frac{1}{2} + 3\epsilon} \sum_{\substack{g \leq x \\ g \text{ squarefull}}} \frac{1}{g^\frac{1}{2}}
\ll x^{1 + \epsilon'} Z^{-\frac{1}{2} + 3\epsilon} \log x.
\end{align*}
By picking $Z = X^{\frac{\delta}{2}}$, $\epsilon$ and $\epsilon'$ sufficiently small, we get the desired result with $\theta_1 = \frac{\delta}{4}$.

\section{Sums of type II}\label{PROOFtypeIIprop}
Our goal in this section is to prove Proposition~\ref{typeIIprop}, thereby completing the proof of Theorem~\ref{mainThm}. A power-saving bound for the bilinear sum in Proposition~\ref{typeIIprop} is possible because the symbol
$$
[\alpha]_r = \left(\frac{r(\alpha)}{\alpha}\right)
$$
is \textit{not} multiplicative in $\alpha$ but instead satisfies the following elegant identity, analogous to \cite[(3.8), p.\ 708]{FIMR}. Let $\alpha$ and $\beta$ be odd elements in $\OO_M$. Then
\begin{equation}\label{twistMult}
[\alpha\beta]_r = \left(\frac{r(\alpha\beta)}{\alpha\beta}\right) = [\alpha]_r[\beta]_r\left(\frac{r(\alpha)}{\beta}\right)\left(\frac{r(\beta)}{\alpha}\right) = \ve_3\cdot [\alpha]_r[\beta]_r\gamma(\alpha, \beta),
\end{equation}
where
\begin{equation}\label{defGamma}
\gamma(\alpha, \beta) = \left(\frac{\beta}{r(\alpha)r^3(\alpha)}\right),
\end{equation}
and $\ve_3\in\{\pm 1\}$ depends only on the congruence classes of $\alpha$ and $\beta$ modulo $8$ (see Lemma~\ref{tQR}). We remark here that the natural one-line proof of \eqref{twistMult} should be contrasted with the rather involved proofs of \cite[Lemma 20.1, p.\ 1021]{FI1} and \cite[Proposition 8, p.\ 31]{Milovic2}. It would be very interesting to find a common source of these identities, if it exists.

With $\mu_1, \ldots, \mu_{8}$ and $[\cdot] = \psi(\cdot \bmod F)[\cdot]_r$ is as in the beginning of Section~\ref{PROOFtypeIprop}, we see that the bilinear sum from Proposition~\ref{typeIIprop} is equal to
$$
\frac{1}{64}\sum_{\zeta\in\left\langle \zeta_8\right\rangle}\sum_{i = 1}^{8}B(M, N; \zeta, i)
$$
where
\begin{equation}\label{sumBZI}
B(M, N; \zeta, i) = \sum_{\substack{\alpha\in\DD(M)}}\sum_{\substack{\beta\in\DD(N)}}v_{\alpha}w_{\beta}[\zeta \mu_i \alpha \beta].
\end{equation}
Here $\DD(X) = \{x\in\DD:\ \Norm(x)\leq X\}$; $v_{\alpha}$ (resp.\ $w_{\beta}$) depends only on the ideal generated by $\alpha$ (resp.\ $\beta$); and, the double sum over $\alpha$ and $\beta$ is assumed to be supported on $\alpha$ and $\beta$ such that $(\alpha\beta, F) = 1$.

The condition $(\alpha\beta, F) = 1$ is equivalent to the two conditions $(\alpha, F) = 1$ and $(\beta, F) = 1$. Hence we can decompose the sum \eqref{sumBZI} into $(\#(\OO_M/F\OO_M)^{\times})^2$ sums $B(M, N; \zeta, i, \rho_1, \rho_2)$ where we further restrict the support of $\alpha$ and $\beta$ to fixed invertible congruence classes modulo~$F$, i.e.,
\begin{equation}\label{supportwz}
\alpha\equiv \rho_1\bmod F\ \ \ \ \text{ and }\ \ \ \ \beta\equiv \rho_2\bmod F.
\end{equation}
Hence, with $\ve_4 = \psi(\zeta \mu_i \rho_1\rho_2\bmod F)$ fixed for fixed $\zeta$, $\mu_i$, $\rho_1$, and $\rho_2$, we have
\begin{equation}\label{sumBZIR}
B(M, N; \zeta, i, \rho_1, \rho_2) = \ve_4 \sum_{\substack{\alpha\in\DD(M)}}\sum_{\substack{\beta\in\DD(N)}}v_{\alpha}w_{\beta}[\zeta \mu_i \alpha \beta]_r,
\end{equation}
where we again note that the support of $\alpha$ and $\beta$ is restricted to \eqref{supportwz}. To prove Proposition~\ref{typeIIprop}, it suffices to prove the desired estimate for each of the
$$
64\cdot \left(\#(\OO_M/F\OO_M)^{\times}\right)^2
$$
sums $B(M, N; \zeta, i, \rho_1, \rho_2)$. To this end, we now take advantage of the special non-multiplicativity of the spin symbol $[\cdot]_r$. By \eqref{twistMult}, we can unfold $[\zeta \mu_i \alpha\beta]_r$ into the product
$$
[\zeta \mu_i \alpha\beta]_r = \ve_5[\alpha\beta]_r[\zeta \mu_i]_r\gamma(\zeta \mu_i, \alpha\beta).
$$
The factor $\ve_5\in\{\pm 1\}$ depends only on the congruence classes $\zeta \mu_i\bmod 8$ and $\alpha\beta\bmod 8$, the factor $[\zeta \mu_i]_r$ does not depend on $\alpha$ and $\beta$ in any way, and the factor
$$
\gamma(\zeta \mu_i, \alpha\beta) = \left(\frac{\zeta \mu_i}{r(\alpha\beta)r^3(\alpha\beta)}\right)
$$
is determined by the congruence class $r(\alpha\beta)r^3(\alpha\beta)\bmod 8$, by Lemma~\ref{mod8}. As $8$ divides $F$, all of these congruence classes are determined by $\zeta$, $\mu_i$, $\rho_1$ and $\rho_2$. Hence
\begin{equation}\label{sumBZIR2}
B(M, N; \zeta, i, \rho_1, \rho_2) = \ve_6\cdot \sum_{\substack{\alpha\in\DD(M)}}\sum_{\substack{\beta\in\DD(N)}}v_{\alpha}w_{\beta}[\alpha\beta]_r,
\end{equation}
where $\ve_6= \ve_6(\zeta, \mu_i, \rho_1, \rho_2)$ depends only on $\zeta$, $\mu_i$, $\rho_1$, and $\rho_2$ but not on $\alpha$ and $\beta$. Next, using \eqref{twistMult} again, we get
\begin{equation}\label{sumBZIR3}
B(M, N; \zeta, i, \rho_1, \rho_2) = \ve_7\cdot \sum_{\substack{\alpha\in\DD(M)}}\sum_{\substack{\beta\in\DD(N)}}v_{\alpha}'w_{\beta}'\gamma(\alpha, \beta),
\end{equation}
where $\ve_7$ depends only on $\zeta$, $\mu_i$, $\rho_1$, and $\rho_2$, and 
$$
v_{\alpha}' = v_{\alpha}\cdot [\alpha]_r\ \ \ \ \ \text{ and }\ \ \ \ \ \ w_{\beta}' = w_{\beta}\cdot [\beta]_r.
$$
The sum in \eqref{sumBZIR3} has exactly the same shape as \cite[(3.2), p.\ 11]{KM2}. Moreover, the function $\gamma$ satisfies the properties (P1)-(P3) on page 11 of \cite{KM2}; indeed, (P1) follows by Lemma~\ref{tQR}, and (P2) is clear. For (P3), suppose that $r(\alpha)r^3(\alpha)\OO_M = \aaa^2$ for some odd ideal $\aaa\subset \OO_M$. Then, as $r(\alpha)r^3(\alpha)$ is fixed by $r^2$ and is thus an odd element of $\Q(\zeta_8)$, we have
$$
r(\alpha)r^3(\alpha)\Z[\zeta_8] = \aaa'^2
$$
for some odd ideal $\aaa'\subset \Z[\zeta_8]$. Taking norms to $\Q$, we get that
$$
\Norm_{M/\Q}(\alpha) = \Norm_{M/\Q}(r(\alpha)) = \Norm_{\Q(\zeta_8)/\Q}(r(\alpha)r^3(\alpha))= \Norm_{\Q(\zeta_8)/\Q}(\aaa')^2.
$$
Hence if $\Norm_{M/\Q}(\alpha)$ is not a square, we see that $r(\alpha)r^3(\alpha)$ does not generate the square of an ideal in $\OO_M$, and so
$$
\sum_{\xi\bmod \Norm(\alpha)\OO_M}\gamma(\alpha, \xi) = \Norm(\alpha)^6\cdot \sum_{\xi\bmod r(\alpha)r^3(\alpha)}\left(\frac{\xi}{r(\alpha)r^3(\alpha)}\right) = \Norm(\alpha)^6\cdot 0 = 0,
$$
which proves (P3). Proposition~\ref{typeIIprop} now follows by \cite[Proposition 3.6, p.\ 11]{KM2}.

\section{Proof of Theorem~\ref{Thmi}}\label{SectionThmi}
We will now deduce Theorem~\ref{Thmi} from Theorem~\ref{mainThm} by choosing the factor $\psi$ in the definition of $s_{\aaa}$ appropriately. First note that Theorem~\ref{Thmi} is equivalent to the statement that
$$
\sum_{p\leq X}a_p \ll X^{1-\delta'},
$$
where 
\begin{equation}\label{defap}
a_p = 
\begin{cases}
1 & \text{if }h(-4p)\equiv 0\bmod 16 \\
-1 & \text{if }h(-4p)\equiv 8\bmod 16 \\
0 & \text{otherwise.}
\end{cases}
\end{equation}
We will use an algebraic criterion for the $16$-rank due to Bruin and Hemenway~\cite{BH}. Let $p$ be a prime number such that $h(-4p) \equiv 0\bmod 8$, i.e., such that $p$ splits completely in $M/\Q$. As in Section~\ref{GovField}, set $K_1 = \Q(i, \sqrt{1+i})$. Let $\rho$ be a prime in $\OO_{K_1}$ dividing $p$, and let $\delta_p$ be an element of $\OO_{K_1}$ such that $\Norm_{K_1/\Q(i)}(\delta_p) = p$ and such that $\delta_p\notin \rho\OO_{K_1}$. Bruin and Hemenway proved that
\begin{equation}\label{crit0}
h(-4p)\equiv 0\bmod 16\Longleftrightarrow \left(\frac{\delta_p\cdot \sqrt{1+i}}{\rho}\right)_{K_1} = 1.
\end{equation}
We will now interpret this symbol as a quadratic residue symbol in $M$. Recall the definition of $r$ and $s$ and the field diagram in Section~\ref{GovField}.

Let $\pi$ be a prime in $\OO_M$ dividing $p$ such that
\begin{equation}\label{defw}
p = \prod_{\sigma\in\Gal(M/\Q)}\sigma(\pi).
\end{equation}
We define elements $\rho$ and $\delta_p$ in $\OO_{K_1}$ by setting $\rho = \pi\cdot s(\pi)$ and
$$
\delta_p = r(\pi)r^2(\pi)\cdot sr(\pi)sr^2(\pi).
$$
Note that $\Norm_{K_1/\Q(i)}(\delta_p) = \delta_p\cdot r^2(\delta_p) = p$ and $\delta_p\notin\rho\OO_{K_1}$, so that $\rho$ and $\delta_p$ satisfy the assumptions implicit in criterion \eqref{crit0}. Next, note that since $p$ splits completely in $M/\Q$, the inclusion $\OO_{K_1}\hookrightarrow \OO_M$ induces an isomorphism of finite fields of order $p$
$$
\OO_{K_1}/\rho\OO_{K_1}\cong \OO_M/\pi\OO_M.
$$
Hence
$$
\left(\frac{\delta_p\cdot \sqrt{1+i}}{\rho}\right)_{K_1} = \left(\frac{\delta_p\cdot \sqrt{1+i}}{\pi}\right)_{M},
$$
and so
\begin{equation}\label{crit1}
h(-4p)\equiv 0\bmod 16 \Longleftrightarrow \left(\frac{r(\pi)r^2(\pi)\cdot sr(\pi)sr^2(\pi)\cdot \sqrt{1+i}}{\pi}\right)_{M} = 1.
\end{equation}
The above quadratic residue symbol factors into five quadratic residue symbols, the first four of which are of the form $\left(\frac{\sigma(\pi)}{\pi}\right)_{M}$ with $\sigma$ in $\{r, r^2, sr, sr^2\}$, and the last one of which is $\left(\frac{\sqrt{1+i}}{\pi}\right)_{M}$. For $\sigma\in\Gal(M/\Q)$, we set
$$
[\alpha]_{\sigma} = \left(\frac{\sigma(\alpha)}{\alpha}\right)_M.
$$
We will now show that when $\sigma$ is an element of order $2$, the spin symbol $[\alpha]_{\sigma}$ can be absorbed into the factor $\psi$. One part of what follows is an adaptation of the treatment of such spins in \cite[Section~12, p.\ 745-749]{FIMR}.

\begin{prop}\label{modF}
Let $\alpha \in \OO_M$ be such that $(\alpha, F) = 1$, and let $\sigma$ be an element of order $2$ in $\Gal(M/\Q)$ such that $(\alpha, \sigma(\alpha)) = 1$. Then $[\alpha]_{\sigma}$ depends only on $\sigma$ and on the congruence class of $\alpha$ modulo $F$.
\end{prop}
The proof of our claim proceeds in two steps. The first step will be to reduce to the case $\alpha \equiv 1 \bmod 8$. The second step will be to use the ideas from Section 12 of \cite{FIMR}. Recall the definitions of $\RRR$ and $F$ in Section~\ref{GovField}.
\begin{proof}
As $(\alpha, F) = 1$, we also have $(\alpha, \Delta_M) = 1$. Let $\rho'\in\RRR$ be such that $\alpha \rho' \equiv 1 \bmod \Delta_M$ and in particular, by property (P2) from the beginning of Section~\ref{GovField}, such that $\alpha \rho' \equiv 1 \bmod 8$. We emphasize two important facts. First, note that $\rho'$ depends only on $\alpha\bmod \Delta_M$ and hence only on $\alpha\bmod F$. Second, as $\Norm(\rho')$ divides $F$ and $(\rho')$ is a prime of degree $1$, we have
$$
(\sigma(\rho'), \alpha) = (\sigma(\alpha), \rho') = (\rho', \sigma(\rho')) = 1. 
$$
Hence each of the four factors on the right-hand side of
$$
\left( \frac{\sigma(\alpha \rho')}{\alpha \rho'} \right)_M = \left( \frac{\sigma(\alpha)}{\alpha} \right)_M \left( \frac{\sigma(\rho')}{\alpha} \right)_M \left( \frac{\sigma(\alpha)}{\rho'} \right)_M \left( \frac{\sigma(\rho')}{\rho'} \right)_M
$$
is non-zero. Using Lemma~\ref{tQR} and the assumption that $\sigma$ is an involution, we get
$$
\left( \frac{\sigma(\rho')}{\alpha} \right)_M = \ve_8\cdot \left( \frac{\alpha}{\sigma(\rho')} \right)_M = \ve_8\cdot \left( \frac{\sigma(\alpha)}{\rho'} \right)_M,
$$
where $\ve_8\in\{\pm 1\}$ depends only on $\sigma$ and the congruence classes of $\sigma(\rho')$ and $\alpha$ modulo $8$, both of which depend only on $\sigma$ and $\alpha\bmod F$. Furthermore, $\left( \frac{\sigma(\rho')}{\rho'} \right)_M\in\{\pm 1\}$ also depends only on $\sigma$ and $\alpha\bmod F$. This gives
\begin{equation}\label{reduction1}
\left( \frac{\sigma(\alpha \rho')}{\alpha \rho'} \right)_M = \ve_9\cdot \left( \frac{\sigma(\alpha)}{\alpha} \right)_M \left( \frac{\sigma(\alpha)^2}{\rho'} \right)_M = \ve_9\cdot \left( \frac{\sigma(\alpha)}{\alpha} \right)_M, 
\end{equation}
where $\ve_9\in\{\pm 1\}$ depends only on $\sigma$ and $\alpha\bmod F$. So from now on we may assume that $\alpha \equiv 1 \bmod 8$.

In the interest of not being repetitive, we now refer to the argument used to prove \cite[Proposition 12.1, p.\ 745]{FIMR}. Define $L$ to be the subfield of $M$ fixed by $\left\langle\sigma \right\rangle$. In our case, the discriminant ideal $\Disc(M/L)$ is even, and in fact divides a power of $2\OO_L$. Although the proof of \cite[Proposition 12.1, p.\ 745]{FIMR} relies on $\mathfrak{D}$ being odd in an essential way, we will overcome this by using the fact that $\OO_L$ is a principal ideal domain.

Similarly as in \cite[(12.4), p.\ 747]{FIMR}, one can deduce that
$$
\left( \frac{\sigma(\alpha)}{\alpha} \right)_M = \ve_{10}\left( \frac{-\gamma^2}{\beta} \right)_L,
$$
where $\ve_{10}\in\{\pm 1\}$ depends only on $\sigma$ and $\alpha\bmod 8$, and where $\gamma$ and $\beta$ are defined via
$$
\beta = \frac{1}{2}\left(\alpha+\sigma(\alpha)\right)\equiv 1\bmod 4, \ \ \ \ \ \ \ \ \ \ \ \ \gamma = \frac{1}{2}\left(\alpha-\sigma(\alpha)\right)\equiv 0\bmod 4.
$$
Defining the submodule $\mathcal{M}$ of $\OO_M$ in the same way as on \cite[p.\ 747]{FIMR}, i.e., $\mathcal{M}=\OO_L+\frac{1+\alpha}{2}\OO_L$, we arrive at the identity
$$
\gamma^2\OO_L = \Disc(\mathcal{M}) = \aaa^2\Disc(M/L),
$$
where $\aaa$ is an ideal in $\OO_L$ such that $\OO_M/\mathcal{M}\cong\OO_L/\aaa$. Since $\OO_L$ is a principal ideal domain (see (P1) in Section~\ref{GovField}), we obtain the equation
$$
\gamma^2 = u\cdot a^2\cdot D,
$$
where now $D\in\OO_L$ is some generator of the discriminant $\Disc(M/L)$, $a\in\OO_L$ is some generator of the ideal $\aaa$, and $u\in\OO_L^{\times}$. Then we have
$$
\left( \frac{-\gamma^2}{\beta} \right)_L = \left( \frac{-uD}{\beta} \right)_L,
$$
which, by Lemma~\ref{mod8}, depends only on the congruence class $\beta\bmod 8D$. One can check that $16D$ divides $\Delta_M$ for any involution $\sigma\in\Gal(M/\Q)$, and so $\beta\bmod 8D$ is completely determined by $\sigma$ and the congruence class $\alpha\bmod \Delta_M$. Hence, whenever $\alpha\equiv 1\bmod 8$, the symbol $[w]_{\sigma}$ only depends on $\sigma$ and $\alpha\bmod \Delta_M$. In conjunction with \eqref{reduction1}, this completes the proof of our proposition. 
\end{proof}

If $\rho$ is an invertible class modulo $F$ and $\sigma\in\{r^2, sr, sr^2\}$, we define
$$
\psi_{\sigma}(\rho) = [\alpha]_{\sigma},
$$
where $\alpha$ is any element of $\OO_M$ such that $\alpha\equiv \rho\bmod F$ and such that $(\alpha, \sigma(\alpha)) = 1$; this is well-defined by Proposition~\ref{modF}. Moreover, define
$$
\psi_M(\rho) = \left(\frac{\sqrt{1+i}}{\alpha}\right)_M,
$$
where $\alpha$ is any element of $\OO_M$ such that $\alpha\equiv \rho\bmod F$; this is well-defined by Lemma~\ref{mod8}. We then define
\begin{equation}\label{defpsii}
\psi_0(\rho)=\psi_{r^2}(\rho)\psi_{sr}(\rho)\psi_{sr^2}(\rho)\psi_{M}(\rho).
\end{equation}
We now check that $\psi_0(\alpha \mod F) = \psi_0(\alpha\beta^2 \bmod F)$ for all $\alpha\in\OO_M$ coprime to $F$ and all $\beta\in\OO_M^{\times}$. Indeed, it is clear that $\psi_{M}(\alpha\beta^2\bmod F) = \psi_{M}(\alpha\bmod F)$, and, for any $\sigma\in\Gal(M/\Q)$, we have
\begin{equation}\label{welldef}
\left(\frac{\sigma(\alpha\beta^2)}{\alpha\beta^2}\right) = \left(\frac{\sigma(\alpha\beta^2)}{\alpha}\right) = \left(\frac{\sigma(\alpha)}{\alpha}\right)\left(\frac{\sigma(\beta)^2}{\alpha}\right) = \left(\frac{\sigma(\alpha)}{\alpha}\right).
\end{equation}
From \eqref{crit1}, we now deduce the following criterion for the $16$-rank of $\CL(-4p)$, valid for all but finitely many primes $p$.
\begin{prop}\label{crit2}
Let $p$ be a rational prime such that $p$ splits completely in $M/\Q$ and such that $(p, F) = 1$. Let $\pi$ be any prime in $\OO_M$ dividing $p$. Then
$$
h(-4p)\equiv 0\bmod 16 \Longleftrightarrow \psi_0(\pi\bmod F)\cdot[\pi]_r = 1.
$$
\end{prop}
Let $a_p$ be defined as \eqref{defap}. With $\psi_0$ as in \eqref{defpsii}, we set $\psi = \psi_0$ and define $s_{\aaa}$ as in \eqref{defan}. If $(p, F) = 1$, $p$ splits completely in $M/\Q$, and $\pp$ is any prime ideal in $\OO_M$ lying above~$p$, then Proposition~\ref{crit2} implies that 
\begin{equation}\label{apsp}
a_p = s_{\pp}.
\end{equation}
Since there are only finitely many primes dividing $F$, and since each unramified degree $1$ prime ideal $\pp$ in $\OO_M$ has $8$ conjugates, we have
$$
\sum_{p\leq X}a_p = \sum_{\substack{p\leq X \\ p\nmid F}}a_p + O(1) = \frac{1}{8}\sum_{\substack{\Norm(\pp) = p \leq X \\ p\nmid F}}s_{\pp} + O(1) = \frac{1}{8}\sum_{\substack{\Norm(\pp) = p \leq X}}s_{\pp} + O(1).
$$
The number of prime ideals in $\OO_M$ of degree at least $2$ and of norm $\leq X$ is
$$
\leq 4\sum_{p\leq X^{\frac{1}{2}}}1\ll X^{\frac{1}{2}},
$$
so we have
$$
\sum_{p\leq X}a_p = \frac{1}{8}\sum_{\substack{\Norm(\pp)\leq X}}s_{\pp} + O(X^{\frac{1}{2}}).
$$
Theorem~\ref{mainThm} in conjunction with \eqref{apsp} now gives the desired estimate.

\section{Proof of Theorem~\ref{Thmii}}\label{SectionThmii}
To deduce Theorem~\ref{Thmii} from Theorem~\ref{mainThm}, we will make a different choice for $\psi$. Similarly as in the proof of Theorem~\ref{Thmi}, we define
\begin{equation}\label{defbp}
b_p = 
\begin{cases}
1 & \text{if }h^+(8p)\equiv h(8p)\equiv 0\bmod 8 \\
-1 & \text{if }h^+(8p)+4 \equiv h(8p)\equiv 4\bmod 8 \\
0 & \text{otherwise}
\end{cases}
\end{equation}
and note that Theorem~\ref{Thmii} is equivalent to the estimate
$$
\sum_{p\leq X}b_p \ll X^{1-\delta'}.
$$
Throughout, we fix a primitive $16$th root of unity $\zeta_{16}$ and we set $\zeta_8 = \zeta_{16}^2$, $i = \zeta_8^2$, $\sqrt{-2} = \zeta_8 + \zeta_8^3$, and $\sqrt{2} = \zeta_8 + \zeta_8^{-1}$. As stated in the discussion prior to the statement of Theorem~\ref{Thmii}, for a prime number $p\equiv 1\bmod 4$, we have $h^+(8p) \equiv 0\bmod 8$ if and only if $p$ splits completely in the number field
$$
M' = \Q(\zeta_{16}, \sqrt[4]{2}).
$$
Since $1+i = \zeta_8\sqrt{2}$, we have $M = \Q(\zeta_8, \sqrt{1+i}) = \Q(\zeta_8, \zeta_{16}\sqrt[4]{2})$, and so $M\subset M'$ is a quadratic extension, generated by $\sqrt{\zeta_8}$. We now use a criterion of Kaplan and Williams \cite[p.\ 26]{KW84}. Suppose that $p\equiv 1\bmod 8$, i.e., that $h^+(8p)\equiv 0\bmod 4$. Then we can write
\begin{equation}\label{repnofp}
p = a^2 +b^2 = c^2 + 2d^2,
\end{equation}
with $a, b, c, d\in\Z$. After possibly interchanging $a$ and $b$, we can guarantee that $a$ is odd. Replacing $a$ by $-a$ and $c$ by $-c$ is necessary, we can then ensure that
\begin{equation}\label{critcongcond}
a\equiv c\equiv 1\bmod 4.
\end{equation}
Assume now that $h^+(8p)\equiv 0\bmod 8$, i.e., that $p$ splits completely in $M'/\Q$; this forces the congruence conditions \cite[p.\ 23]{KW84}
$$
a\equiv c\equiv 1\bmod 8, \quad b\equiv 0\bmod 8, \quad d\equiv 0\bmod 4.
$$
With $b_p$ defined as in \eqref{defbp}, and with $\alpha$ and $\beta$ as on page 26 of \cite{KW84}, we have
$$
b_p = \alpha\beta = (-1)^{(a-1+b+2d+h(-4p))/8}.
$$
As $M\subset M'$, it must be that $h(-4p)\equiv 0\bmod 8$, so that with $a_p$ as in the statement of Theorem~\ref{Thmi}, we get
\begin{equation}\label{bpformula}
b_p = (-1)^{(a-1+b+2d)/8}a_p.
\end{equation}
In light of \eqref{apsp}, it remains to express the factor $(-1)^{(a-1+b+2d)/8}$ in terms of a generator $\varpi$ for an ideal in $\OO_M$ lying above $p$. The main difficulty here lies in the sensitivity of the formula \eqref{bpformula} to the conditions \eqref{critcongcond}. Note that
$$
(-1)^{(a-1+b)/8} =
\begin{cases}
1 & \text{if }a+b-1\equiv 0\bmod 16 \\
-1 & \text{if }a+b-1 \equiv 8\bmod 16
\end{cases}
$$
and
$$
(-1)^{d/4} =
\begin{cases}
1 & \text{if }d\equiv 0\bmod 8 \\
-1 & \text{if }d \equiv 4\bmod 8.
\end{cases}
$$
The only units in $\Z[\sqrt{-2}]$ are $\pm 1$, so if $\Norm_{M/\Q(\sqrt{-2})}(\varpi) = c'+d'\sqrt{-2}$, we must have either $(c', d') = (c, d)$ or $(c', d') = (-c, -d)$. Note that $d\equiv 0\bmod 8$ if and only if $-d\equiv 0\bmod 8$, and also $d\equiv 4\bmod 8$ if and only if $-d\equiv 4\bmod 8$. Hence the factor $(-1)^{d/4}$ in \eqref{bpformula} is always equal to $(-1)^{d'/4}$.

The situation for $\Z[i]$ is slightly more complicated. Suppose $\Norm_{M/\Q(i)}(\varpi) = a'+b'i$. Define $e(\varpi)\in\{\pm 1\}$ by the equation
$$
a'+b'\equiv e(\varpi) \bmod 4.
$$
Since $p = a'^2 + b'^2 \equiv 1\bmod 8$, one of $a'$ and $b'$ must be congruent to $0\bmod 4$, and the other is then congruent to $e(\varpi) \bmod 4$. If $e(\varpi) = 1$, then either $(a', b')$ or $(b', a')$ satisfies the same conditions as $(a, b)$ in \eqref{repnofp} and \eqref{critcongcond}, and so $(-1)^{(a-1+b)/8} = (-1)^{(a'+b'-1)/8}$. If $e(\varpi) = -1$, then either $(-a', -b')$ or $(-b', -a')$ satisfies the same conditions as $(a, b)$ in \eqref{repnofp} and \eqref{critcongcond}, and so $(-1)^{(a-1+b)/8} = (-1)^{(-a'-b'-1)/8} = (-1)^{(a'+b'+1)/8}$. In any case, $(-1)^{(a-1+b)/8} = (-1)^{(a'+b'-e(\varpi))/8}$, so that
\begin{equation}\label{bpformula2} 
b_p = (-1)^{(a'+b'-e(\varpi)+2d')/8}a_p.
\end{equation}
Note that the formula \eqref{bpformula2} holds regardless of the congruence classes of $a'$, $b'$, and $d'$. In other words, we have managed to remove the dependence of the formula for $b_p$ on conditions of the shape \eqref{critcongcond}.

Now let $\alpha$ be \textit{any} odd element in $\OO_M$, not necessarily an element of norm $p$. We define $a'', b'', c'', d''\in \Z$ and $e(\alpha)\in\{\pm 1\}$ via the equations
\begin{equation}\label{eqn14}
\Norm_{M/\Q(i)}(\alpha) = a''+b''i, \quad \Norm_{M/\Q(\sqrt{-2})}(\alpha) = c''+d''\sqrt{-2}, \quad a''+b'' = e(\alpha)\bmod 4.
\end{equation}
Let $\rho$ be an invertible congruence class modulo $F$. Define
\begin{equation}\label{defpsit}
\psi_t(\rho) = \frac{1}{2} \left(\exp\left(\frac{\pi i}{8} (a'' + b'' - e(\alpha))\right) + \exp\left(-\frac{\pi i}{8} (a'' + b'' - e(\alpha))\right)\right)\exp\left(\frac{\pi i}{4}d''\right),
\end{equation}
where $\alpha$ is any element of $\OO_M$ such that $\alpha\equiv \rho\bmod F$ and $a''$, $b''$, $d''$, and $e(\alpha)$ are defined via the equations \eqref{eqn14}; this is well-defined since $F$ is divisible by $16$ and $\exp(2\pi i) = 1$. Finally, we define
\begin{equation}\label{defpsi8}
\psi_{M'}(\rho) = \left(\frac{\zeta_8}{\alpha}\right)_M, 
\end{equation}
where $\alpha$ is any element of $\OO_M$ such that $\alpha\equiv \rho\bmod F$; this is well-defined by Lemma~\ref{mod8}.

Suppose $\alpha\in\OO_M$ is coprime to $F$, and suppose $\beta\in\OO_M^{\times}$. Again, it is clear that $\psi_{M'}(\alpha\beta^2\bmod F) = \psi_{M'}(\alpha\bmod F)$. Furthermore, because $\Norm_{M/\Q(i)}(\beta^2) = \Norm_{M/\Q(i)}(\beta)^2 \in \{\pm 1\}$ and $\Norm_{M/\Q(\sqrt{-2})}(\beta^2) = \Norm_{M/\Q(\sqrt{-2})}(\beta)^2 = 1$, and because of the symmetry in \eqref{defpsit} with respect to the transformation $(a'' + b'' - e(\alpha))\mapsto -(a'' + b'' - e(\alpha))$, we also have $\psi_t(\alpha\beta^2\bmod F) = \psi_t(\alpha\bmod F)$.

Finally, with $\psi_0$ defined as in \eqref{defpsii}, we define two functions $\psi_1, \psi_2$ on  $(\OO_M/F\OO_M)^{\times}$ by setting
\begin{equation}\label{defpsiii1}
\psi_1(\rho)=\psi_0(\rho)\psi_t(\rho)
\end{equation}
and
\begin{equation}\label{defpsiii2}
\psi_2(\rho)=\psi_0(\rho)\psi_t(\rho)\psi_{M'}(\rho).
\end{equation}
Now suppose $p$ splits completely in $M/\Q$ and let $\varpi$ be any prime in $\OO_M$ of norm $p$. Since $M' = M(\sqrt{\zeta_8})$, we have
$$
\frac{1}{2}\left(1+ \left(\frac{\zeta_8}{\varpi}\right)_M\right) =
\begin{cases}
1 & \text{if }p\text{ splits completely in }M'/\Q \\
0 & \text{otherwise,}
\end{cases}
$$
so this can be detected by $\psi_{M'}$ for $p$ coprime to $F$. With $a''$, $b''$, and $d''$ defined as in \eqref{eqn14} with $\alpha = \varpi$, we always have $a''+b''-e(\varpi)\equiv 0\bmod 8$; as $\exp(\pi i) = \exp(-\pi i)$, we have 
$$
\psi_t(\varpi) = \exp\left(\frac{\pi i}{8} (a'' + b'' - e(\varpi)+2d'')\right) = (-1)^{(a''+b''-e(\varpi)+2d'')/8}.
$$
Hence from \eqref{bpformula2} and Proposition~\ref{crit2}, supposing also that $(p, F) = 1$, we obtain
\begin{equation}\label{bpformula3}
b_p = \frac{1}{2}\left(\psi_1(\varpi\bmod F) + \psi_2(\varpi\bmod F)\right)[\varpi]_r.
\end{equation}
Now, with $\psi_1$ and $\psi_2$ as in \eqref{defpsiii1} and \eqref{defpsiii2}, respectively, we set $\psi = \psi_1$ (resp.\ $\psi = \psi_2$) and define $s_{1, \aaa}$ (resp.\ $s_{2, \aaa}$) as in \eqref{defan}. If $(p, F) = 1$, $p$ splits completely in $M/\Q$, and $\pp$ is any prime ideal in $\OO_M$ lying above~$p$, then \eqref{bpformula3} implies that 
\begin{equation}\label{bpsp}
b_p = \frac{1}{2}\left(s_{1, \pp} + s_{2, \pp}\right).
\end{equation}
By the same argument as at the end of Section~\ref{SectionThmi}, Theorem~\ref{mainThm} applied to the sequences $\{s_{1, \aaa}\}_{\aaa}$ and $\{s_{2, \aaa}\}_{\aaa}$ proves Theorem~\ref{Thmii}.

\section{Proof of Theorem~\ref{Thmiv}}\label{SectionThmiv}
We start by recalling a criterion due to Bruin and Hemenway \cite[Theorem B, p.\ 66]{BH}. Suppose $p$ is a prime number that splits completely in $M/\Q$ and let $\varpi$ be a prime in $\OO_M$ of absolute norm $p$. Then
$$
\left(\frac{\zeta_8\cdot r(\varpi)r^2(\varpi)sr(\varpi)sr^2(\varpi)\cdot \sqrt{1+i}}{\varpi}\right)_{M} = -1 \Longrightarrow \left(\Z/4\Z\right)^2\hookrightarrow\Sha(E_p) 
$$
(the right hand side implies that $p\in W(3)\setminus W(2)$, where $W(e)$ is defined in \cite[p.\ 65]{BH}; see also \cite[Corollary 2.2, p.\ 67]{BH}). The above product differs from the product in \eqref{crit1} only by the factor $(\frac{\zeta_8}{\varpi})_M$. We thus define $\psi: (\OO_M/F\OO_M)^{\times}\rightarrow\mathbb{C}$ by
$$
\psi(\rho) = \psi_0(\rho)\psi_{M'}(\rho),
$$
where $\psi_0$ is as in \eqref{defpsii} and $\psi_{M'}$ is as in \eqref{defpsi8}. Theorem~\ref{mainThm} applied to the sequence $\{s_{\aaa}\}_{\aaa}$, defined as in \eqref{defan} with $\psi$ as above, now gives the desired result.

\bibliographystyle{plain}
\bibliography{Koymans_Milovic_References}
\end{document}